\newtheorem{definition}{Definition} 
\numberwithin{definition}{section}
\newtheorem{proposition}[definition]{Proposition} 
\newtheorem{theorem}[definition]{Theorem} 
\newtheorem{lemma}[definition]{Lemma} 
\newtheorem{corollary}[definition]{Corollary}
\theoremstyle{definition}
\newtheorem{example}[definition]{Example}
\newcommand{\rar}{\rightarrow}
\newcommand{\RR}{\mathbb{R}}
\newcommand{\NN}{\mathbb{N}}
\newcommand{\ZZ}{\mathbb{Z}}
\newcommand{\SSS}{\mathcal{S}}
\newcommand{\cox}{\operatorname{Cox}}
\newcommand{\ehr}{\operatorname{ehr}}
\renewcommand{\dim}{\mathsf{dim}\ }
\newcommand{\defn}[1]{\emph{#1}}
\renewcommand{\dim}{\operatorname{dim}}
\newcommand{\type}{\operatorname{type}}
\newcommand{\mset}[2]{ \left\{ #1 \; \middle| \; #2 \right\}}
\newcommand{\ncS}{\mathcal{S}}
\newcommand{\ncM}{\mathcal{M}}
\newcommand{\ncL}{\mathcal{L}}
\newcommand{\T}{T} 
\newcommand{\Tn}{T_n} 
\newcommand{\TS}{\T_{S^{n-2}}} 
\newcommand{\TC}{\T_{(0,1)^n}} 
\newcommand{\allow}{\Lambda} 
\newcommand{\poly}{\chi} 
\newcommand{\forb}{\Gamma} 
\begin{document}
\title{Scheduling Problems}
\author{Felix Breuer}
\thanks{Felix Breuer was partially supported by the Deutsche Forschungsgemeinschaft (DFG) grant BR 4251/1-1 and by the Austrian Science Fund (FWF) Special Research Program \emph{Algorithmic and Enumerative Combinatorics} SFB F50-06.}
\address{Research Institute for Symbolic Computation \\ Johannes Kepler University Linz}
\email{felix@fbreuer.de}
\author{Caroline J.\ Klivans}
\address{Departments of Applied Mathematics and Computer Science \\Brown University}
\email{klivans@brown.edu}
\date{\today}

\begin{abstract} We introduce the notion of a scheduling problem which is a boolean function $S$ over atomic formulas of the form $x_i \leq x_j$.  Considering the $x_i$ as jobs to be performed, an integer assignment satisfying $S$ schedules the jobs subject to the constraints of the atomic formulas.  The scheduling counting function counts the number of solutions to $S$. We prove that this counting function is a polynomial in the number of time slots allowed. 
  Scheduling polynomials include the chromatic polynomial of a graph, the zeta polynomial of a lattice, and the Billera-Jia-Reiner polynomial of a matroid.

To any scheduling problem, we associate not only a counting function for solutions, but also a quasisymmetric function and a quasisymmetric function in non-commuting variables. These scheduling functions include the chromatic symmetric functions of Sagan, Gebhard, and Stanley, and a close variant of  Ehrenborg's quasisymmetric function for posets.  

Geometrically, we consider the space of all solutions to a given scheduling problem.
We extend a result of Steingr\'{i}msson by proving that the $h$-vector of the space of solutions is given by a shift of the scheduling polynomial.  Furthermore, under certain conditions on the defining boolean function, we prove partitionability of the space of solutions and positivity of fundamental expansions of the scheduling quasisymmetric functions and of the $h$-vector of the scheduling polynomial.

\end{abstract}
\maketitle


\section{Introduction}

A \defn{scheduling problem} on $n$ items is given by a boolean
formula $S$ over atomic formulas $x_i\leq x_j$ for
$i,j\in[n]:=\{1,\ldots,n\}$. A \defn{$k$-schedule} solving $S$ 
is an integer vector $a \in [k]^n$, thought of as an assignment of the $x_i$,  
 such that $S$ is true when $x_i = a_i$.  We
consider the $n$ items as jobs to be scheduled into discrete time
slots and the atomic formulas are interpreted as the constraints on
jobs.  A $k$-schedule satisfies all of the constraints using at most
$k$ time slots. 

We will be interested in the number of solutions to a given scheduling
problem and define the scheduling counting function $\poly_S(k)$ to be
the number of $k$-schedules solving $S$.  Our first result shows that
$\poly_S(k)$ is in fact a polynomial function in $k$.  As special
instances, scheduling polynomials include the chromatic polynomial of
a graph, the zeta polynomial of a lattice and the order polynomial of a poset.

Our approach to scheduling problems is both algebraic and geometric.
Algebraically, to any scheduling problem, we associate not only a
counting function for solutions, but also a quasisymmetric function
and a quasisymmetric function in non-commuting variables which record
successively more information about the solutions themselves.
Geometrically, we consider the space of all solutions to a given
scheduling problem via Ehrhart theory, hyperplanes arrangements, and
the Coxeter complex of type A.  As special instances, the varying scheduling structures include the
chromatic functions of Sagan and Gebhard~\cite{GS}, and Stanley~\cite{stan-chromatic}, the chromatic
complex of Stein\-gr\'{i}ms\-son~\cite{Ein}, the hypergraph coloring complexes of Breuer, Dall, and Kubitzke \cite{BDK2012}, the P-partition quasisymmetric functions of Gessel~\cite{Gessel}, the matroid invariant of Billera, Jia, Reiner~\cite{BJR},
and a variant of Ehrenborg's quasisymmetric function for posets~\cite{Ehrenborg}.

We first use the interplay of geometry and algebra to prove a Hilbert
series type result showing that the $h$-vector of the solution space
is given by the $h$-vector of a shift of the scheduling polynomial.  This includes and
generalizes Stein\-gr\'{i}mm\-son's result on the chromatic polynomial and
coloring complex to \emph{all} scheduling problems.  Imposing certain
niceness conditions on the space of solutions allows for stronger
results.  We focus on the case when the boolean function $S$ can be
written as a particular kind of decision tree.  Such decision trees
provide a nested if-then-else structure for the scheduling problem.
In this case we prove that the space of solutions is partitionable.
This in turn implies positivity of the scheduling quasisymmetric
functions in the fundamental bases and the $h$-vector of the
scheduling polynomial.

\section{Preliminaries and Scheduling Functions}
\label{sec:scheduling-problems}

\begin{definition}
A \defn{scheduling problem} on $n$ items is given by a boolean
formula $S$ over atomic formulas $x_i\leq x_j$ for
$i,j\in[n]$. A \defn{$k$-schedule} solving $S$ is an integer vector $a \in [k]^n$, thought of as an assignment of the $x_i$,  
 such that $S$ is true when $x_i = a_i$. The \emph{scheduling counting function}
$$ \chi_S(k) := \#k\text{-schedules solving } S$$
counts the number of $k$-schedules solving a given $S$.
\end{definition}

Suppose we are given a scheduling problem with $3$ jobs.
 Any of the three jobs may be started first but different
requirements are imposed depending on which starts first.  If jobs $1$ or $3$ are started first, then the other must start at the same time as job 2. If job $2$ starts first, then job $1$ must occur next before job $3$ can be started.
We interpret the solutions as all those integer points such that
$x_1 < x_2 = x_3$ or $x_3 < x_1 = x_2$ or $x_2 < x_1 < x_3$.
Importantly, solutions only depend on the relative ordering of
coordinates.

There is a natural geometry to the solutions of a scheduling problem which we describe next.  
An \emph{ordered set partition} or \emph{set composition} $\Phi \vDash
[n]$ is a sequence of non-empty sets $(\Phi_1, \Phi_2, \ldots, \Phi_k)$ such
that for all $i,j$, ($ \Phi_i \cap \Phi_j = \emptyset$) and
($\cup_i \Phi_i = [n]$).  The $\Phi_i$ are the blocks of the ordered
set partition and we will often use the notation $\Phi_1| \Phi_2|
\cdots| \Phi_k$.  Note that within each block, elements are not
ordered, so the ordered set partition $13|4|2 \vDash [4]$ is the same
as $31|4|2 \vDash [4]$.  We will use ordered set partitions to represent integer
points whose relative ordering of coordinates is given by the blocks
of the partition.  For example, $31|4|2$ represents all integer points $(x_1, x_2, x_3, x_4)$
such that $x_1 = x_3 < x_4 < x_2$.

The \emph{braid arrangement} $\mathcal{B}_n$ is the hyperplane arrangement in
$\mathbb{R}^n$ consisting of hyperplanes $x_i = x_j$ for all $i,j \in [n]$.
The hyperplanes have a common intersection equal to the line $x_1 = x_2 = \cdots
= x_n$.  Projecting the arrangement to the orthogonal complement of
this line and intersecting with the unit sphere yields a spherical
simplicial complex known as the \emph{Coxeter complex of type $A$}, $\cox_{A_{n-1}}$.
It can be realized combinatorially as the barycentric subdivision of the
boundary of the simplex.

The faces of $\cox_{A_{n-1}}$ are naturally  labeled by ordered
set partitions.  Each non-empty face of the Coxeter complex can be associated to  a face of the cell decomposition induced by
$\mathcal{B}_n$ on $\mathbb{R}^n$.  A face of the cell decomposition of $\mathcal{B}_n$ specifies for each pair $i,j$ whether $x_i < x_j$, $x_i > x_j$, or $x_i = x_j$, precisely the atomic formulas of scheduling problems.  All points in a fixed face have
the same relative ordering of coordinates.  This relative ordering
induces an ordered set partition on $[n]$.  
  Under this correspondence, we see that each maximal face
corresponds to a partition into blocks of size one (i.e., a full
permutation), see Figure~\ref{fig:coxeter}.  Moreover, a face $F$ is contained in a face $G$ if and only if
the ordered set partition corresponding to $F$ coarsens the ordered set partition
corresponding to $G$; the face lattice is dual to the face lattice of the permutahedron.

\begin{figure}[h]
\includegraphics[angle=270,width=4in]{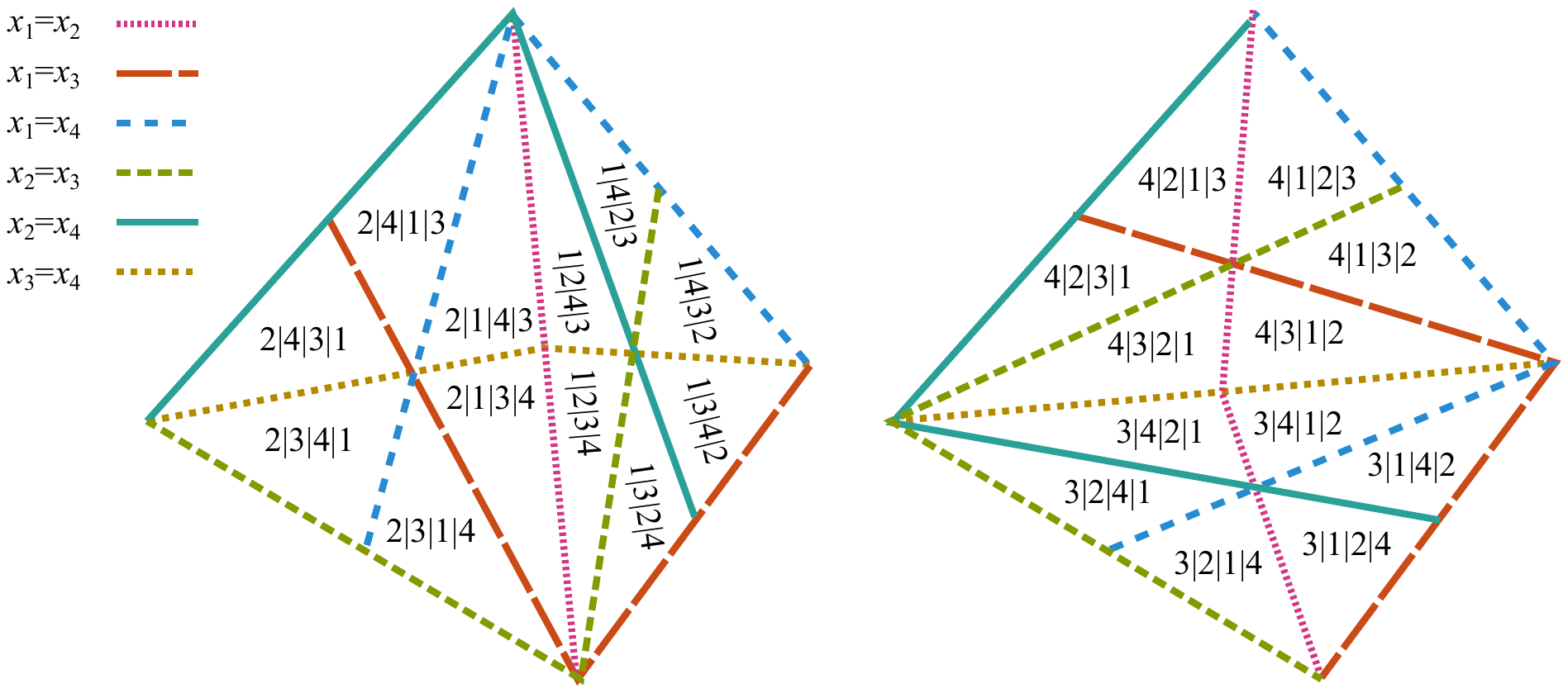}
\caption{ The front and back faces of the Coxeter complex.}
\label{fig:coxeter}
\end{figure}

One of the staples of geometric methods in combinatorics is to interpret a monomial as an integer point in space.  The standard construction is to view a monomial in commuting variables $x_1^{a_1}x_2^{a_2}\dots x_n^{a_n}$ as the point $(a_1, a_2, \ldots, a_n) \in \mathbb{Z}^n$.  In order to develop quasisymmetric functions in non-commuting variables we need a different construction.  
Let $\{x_1, x_2, \ldots \}$ be a collection of non-commuting
variables. For every $a\in\NN^n$, associate the monomial $x_{a_1}\cdots x_{a_n}$ abbreviated as ${\bf x}_a$. For example, $x_2x_1x_3$ corresponds to $(2,1,3)\in\ZZ^3$. The entries of the vector $a$ are given by the \emph{indices} of the monomial, which is well-defined because we are working with non-commuting variables so the factors $x_i$ appear in fixed order. In this way, we can associate to every set of lattice points, $A\subset\NN^n$, a formal sum $N(A)$ of monomials $ N(A) = \sum_{a\in A} {\bf x}_a$. The set of all schedules solving a given scheduling problem $S$ thus corresponds to the generating function
\[
	\ncS_S := \sum_{a\in\NN^n \, : \, a \text{ solves } S} {\bf{x}}_a.
\]

The function $\ncS_S$ has a special structure. Given $a \in \mathbb{N}^n$, let $\Delta(a)$
be the ordered set partition $(\Delta_1 | \Delta_2 | \ldots | \Delta_\ell)$ of $[n]$ such that 
$a$ is the same on each set $\Delta_i$
and satisfies $a|_{\Delta_i} < a|_{\Delta_{i+1}}$ for all $1 \leq i \leq \ell$. 
Define the \emph{order class} of $a$ to be the set of vectors $b$
such that $\Delta(b) = \Delta(a)$.  
For example, for $a = (3,2,2,3,1)$, $\Delta(a) =
5|23|14$ and the order class of $a$ consists of all vectors $x
\in \mathbb{N}^5$ such that $x_5 < x_2 = x_3 < x_1 = x_4$. 
Conversely, an ordered set partition $\Phi$ specifies the relative ordering of coordinates and contains all points in the relative interior of a cone $C(\Phi)=\mset{x\in\RR^n_{>0}}{\Delta(x)=\Phi}$ of the braid arrangement.
The cones $C(\Phi)$ are of the form $C(\Phi)=V\RR^\ell_{>0}$ for matrices $V$ whose columns are called generators and have entries in $\{0,1\}$. The cones $C(\Phi)$ are simplicial; their generators are linearly independent. Moreover, they are unimodular, which means that their fundamental parallelepipeds $V(0,1]^\ell$ contain just a single integer vector.
Crucially, if two vectors $a$ and $b$ have the same order class, then $S(a)\Leftrightarrow S(b)$, that is, either all lattice points in a cone $C(\Phi)$ solve $S$ or none of them do. In the former case, we say that \emph{$\Phi$ solves $S$}. Thus, the solutions to a satisfiable scheduling problem are integer points in a union of these cones, they correspond to a union of open faces of the Coxeter complex.
This geometric phenomenon has an algebraic analogue.

\begin{definition}
A function in non-commuting variables is called quasisymmetric (an element of NCQSym) if
$\forall \, \gamma, \tau \in \mathbb{N}^n$ such that $\gamma$ and $\tau$
are in the same order class ( $\Delta(\gamma) =
\Delta(\tau)$) the coefficient of $x_{\gamma_1}x_{\gamma_2} \cdots
x_{\gamma_n}$ is the same as the coefficient of $x_{\tau_1}x_{\tau_2} \cdots x_{\tau_n}$. We call such functions \emph{nc-quasisymmetric functions} for short.
\end{definition}

The monomial nc-quasisymmetric function $\ncM_\Phi$ indexed by an ordered set partition $\Phi$ is
$$\ncM_{\Phi} := \sum_{a \in \mathbb{N}^n \, \Delta(a) = \Phi} {\bf{x}}_{a}.$$
For example, 
consider the order class of integer points such that the first and third coordinates are equal and less than the second and fourth coordinates which are also equal.  
  The corresponding ordered set partition $\Phi$ is $(13|24)$ and
$$\ncM_{13|24} = x_1x_2x_1x_2 + x_1x_3x_1x_3 + x_2x_3x_2x_3 + x_3x_4x_3x_4 + \cdots$$  
The monomial functions $\ncM_\Phi$ correspond precisely to the sets of lattice points in the cones $C(\Phi)$, see Figure~\ref{fig:cone}, via the function $N$, 
\[
  N(C(\Phi)\cap\ZZ^n) = \ncM_\Phi.
\]
\begin{figure}[t]
\includegraphics[angle=270,width=12cm]{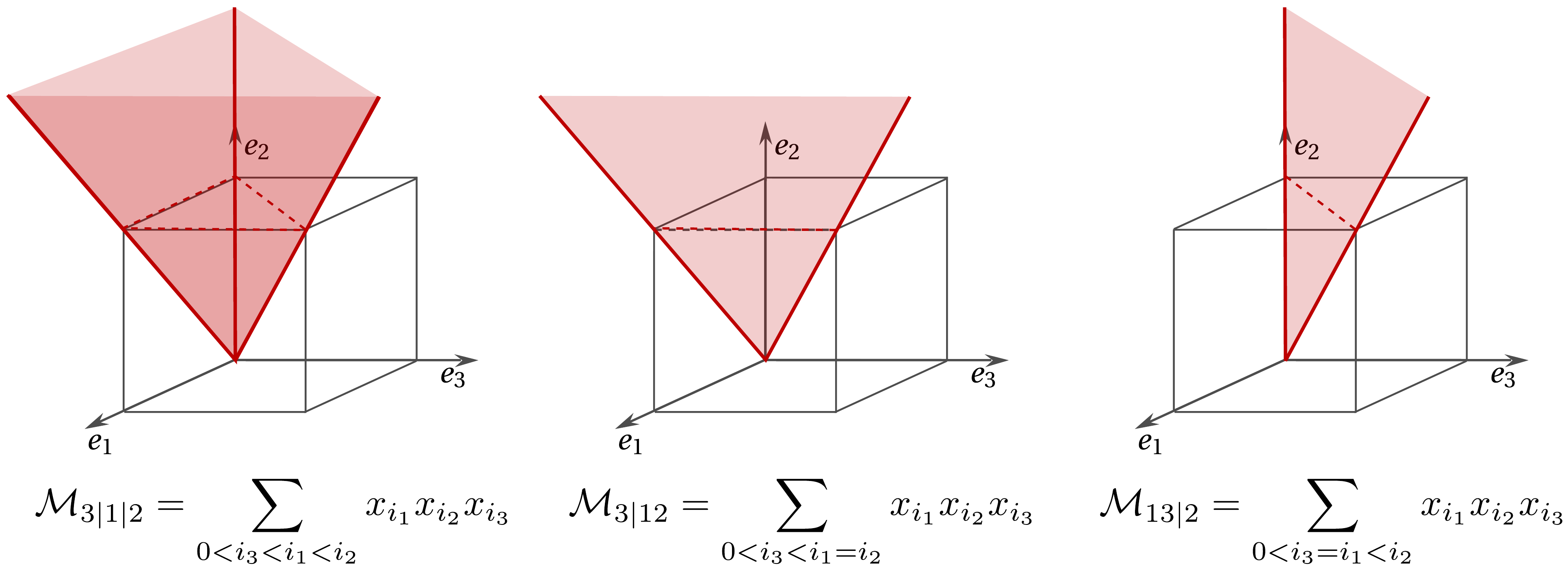}
\caption{\label{fig:cone}Correspondence between $\ncM_{\Phi}$ and $C(\Phi)$.}
\end{figure}
Quasisymmetric functions in non-commuting variables can be expressed as a sum of monomial terms $\ncM_{\Phi}$. We can therefore think of any nc-quasisymmetric function $\mathcal{F}$ with non-negative coefficients in the monomial basis as a multiset of cones, where the multiplicity of lattice points in $C(\Phi)$ is given by the coefficient of $\ncM_\Phi$ in $\mathcal{F}$.
\begin{definition}
\label{scheduling-ncqsym}
Given a scheduling problem $S$ on $n$ items, 
\[
\ncS_S = \sum_{\Phi \text{ solves } S} \ncM_{\Phi}
\]
is an nc-quasisymmetric function, the \emph{scheduling nc-quasisymmetric function} of $S$.
\end{definition}

These observations have a direct impact on the scheduling counting functions $\chi_S$ as well. Informally, an
nc-quasisymmetric function corresponds to a $k$-schedule where $k$ has
been taken to infinity, i.e., there is no deadline.
Imposing a deadline, or restricting to $k$ time slots, corresponds to setting the first $k$ variables $x_1, x_2, \ldots, x_k$ of $\SSS_S$ equal to $1$ and the rest to zero, i.e., $\SSS_S({\bf 1}^k) = \chi_S(k)$. For a single monomial term we have
\begin{equation*}
\label{eqn:ncM-specialization}
\ncM_{\Phi} ({\bf 1}^k) = { k \choose \ell(\Phi) }
\end{equation*}
where $\ell(\Phi)$ is equal to the length, i.e. the number of blocks, of the partition. From Definition~\ref{scheduling-ncqsym} it therefore follows that $\chi_S$ is a linear combination of such binomial coefficients.

From the polyhedral geometry perspective, the substitution of ${\bf 1}^k$ into $\ncM_\Phi$ corresponds to intersecting the cone $C(\Phi)$ with the half-open cube $(0,k]^n$. As Figure~\ref{fig:cone} illustrates, the intersection $C(\Phi)\cap (0,k]^n$ is a half-open simplex. It can be viewed as the $k$-th dilate of a half-open unimodular simplex, since $C(\Phi)\cap (0,k]^n = k\cdot(C(\Phi)\cap (0,1]^n)$, which provides an interesting connection between $\chi_S$ and Ehrhart theory.

For any bounded set $X\subset \RR^n$, the \emph{Ehrhart function} 
$  \ehr_X:\ZZ_{> 0}\rar\ZZ_{\geq 0}$ of $X$ counts the number of integer points in integer dilates of $X$, i.e., $\ehr_X(k) = \#(\ZZ^n \cap k\cdot X$). If $X$ is a polytope whose vertices have integer coordinates, then $\ehr_X(k)$ is a polynomial, called the Ehrhart polynomial of $X$. Two sets $P,Q$ in $\RR^n$ are \emph{lattice equivalent} if there is an affine automorphism $\phi$ of $\RR^n$ with $\phi(P)=Q$ which induces a bijection on the integer lattice $\ZZ^n$. Lattice equivalent sets have the same Ehrhart function. Of special interest to us are the \emph{half-open standard simplices}
\[
    \Delta^n_i = \mset{x\in\RR^{n+1}}{\sum_{i=1}^{n+1} x_i =1, x_1 > 0, \ldots, x_i > 0, x_{i+1} \geq 0, x_{n+1} \geq 0}
\]
of dimension $n$ with $i$ open faces, which have Ehrhart polynomial 
\begin{align}
  \ehr_{\Delta^n_i}(k) = \binom{k+n-i}{n}.
  \label{eqn:half-open-simplex}
\end{align}
If $\Phi$ has $\ell$ parts, then the simplex $C(\Phi)\cap (0,1]^n$ is lattice equivalent to a half-open standard simplex $\Delta^\ell_{\ell}$. The simplex has $k$ open facets and $1$ closed facet, which lies on the closed half of the cube $(0,1]^n$. Its Ehrhart function is thus $\ehr_{C(\Phi)\cap (0,1]^n}(k)=\binom{k}{\ell(\Phi)}$ and $\chi_S$ is the sum over all such Ehrhart functions where $\Phi$ satisfies $S$.
Continuing the example $S(x)=(x_1<x_2 =x_3 \textrm{ \small{or} } x_3 < x_1 = x_2 \textrm{ \small{or} } x_2 < x_1 < x_3)$ gives
 \begin{align*}
 \ncS_S &= \ncM_{1|23} + \ncM_{3|21} + \ncM_{2|1|3},\\
 \chi_S(k) = \ncS_S({\bf 1}^k) &=  2\ehr_{\Delta^2_2}(k) + \ehr_{\Delta^3_3}(k) =  2 {k \choose 2} + {k \choose 3}.
 \end{align*}
 We have now seen both an algebraic and a geometric proof of the  following.
\begin{theorem}
\label{fbasis}
Given a scheduling problem $S$ on $n$ items, the scheduling counting function,
$\poly_S(k)$
is a polynomial in $k$ of degree at most $n$, the \emph{scheduling polynomial} of $S$,
\begin{equation}
\label{eqn:fbasis}
  \poly_S(k) = \sum_{i=1}^n f_i\binom{k}{i},
\end{equation}
 where the coefficients $f_1,\ldots,f_n$ 
are non-negative integers counting the number of ordered set partitions $\Phi$ with $i$ non-empty blocks such that $S(\Phi)$ holds. In particular, the $f_i$ are bounded above by $i!\cdot S(n,i)$, where the $S(n,i)$ are the Stirling numbers of the second kind.
\end{theorem}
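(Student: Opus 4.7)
The plan is to decompose the set of $k$-schedules solving $S$ according to their order classes and then count each piece. The driving observation is that if $\Delta(a) = \Delta(b)$ then $S(a) \Leftrightarrow S(b)$, since $S$ is a boolean combination of atomic formulas $x_i \leq x_j$ whose truth values depend only on the relative order of coordinates. Consequently, for every ordered set partition $\Phi \vDash [n]$, either every lattice point of $C(\Phi)$ solves $S$ or none does (this is already noted in the preceding discussion). Writing $\FFF_S := \{\Phi \vDash [n] : \Phi \text{ solves } S\}$, this yields the disjoint decomposition
\[
  \{a \in [k]^n : S(a)\} \;=\; \bigsqcup_{\Phi \in \FFF_S} \bigl(C(\Phi) \cap [k]^n\bigr).
\]

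Next I would count each piece. For a fixed $\Phi = \Phi_1 | \Phi_2 | \cdots | \Phi_\ell$, a vector $a \in C(\Phi) \cap [k]^n$ is determined by a choice of $\ell$ strictly increasing values $v_1 < v_2 < \cdots < v_\ell$ from $[k]$, setting $a_j := v_r$ for $j \in \Phi_r$. This gives exactly $\binom{k}{\ell(\Phi)}$ lattice points, matching the Ehrhart computation $\ehr_{C(\Phi) \cap (0,1]^n}(k) = \binom{k}{\ell(\Phi)}$ already recorded in the excerpt via the lattice equivalence between $C(\Phi) \cap (0,1]^n$ and the half-open simplex $\Delta^\ell_\ell$. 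Summing and grouping by number of blocks produces
\[
  \poly_S(k) \;=\; \sum_{\Phi \in \FFF_S} \binom{k}{\ell(\Phi)} \;=\; \sum_{i=1}^n f_i \binom{k}{i},
\]
where $f_i := \#\{\Phi \in \FFF_S : \ell(\Phi) = i\}$. Each $\binom{k}{i}$ is a polynomial in $k$ of degree $i \leq n$, so $\poly_S$ is a polynomial of degree at most $n$ whose coefficients in the binomial basis are the non-negative integers $f_i$.

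For the upper bound, $f_i$ is trivially at most the total number of ordered set partitions of $[n]$ into $i$ non-empty blocks. Such a partition is specified by an unordered partition of $[n]$ into $i$ blocks (counted by $S(n,i)$) together with a linear ordering of those $i$ blocks (contributing a factor of $i!$), giving $f_i \leq i! \cdot S(n,i)$. The indexing starts at $i = 1$ because every ordered set partition of a non-empty set has at least one block, which also yields $\poly_S(0) = 0$ as expected.

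There is no real obstacle here: once the order-class invariance of $S$ is in hand, the argument is essentially a bookkeeping exercise, and the excerpt has already supplied both the algebraic route (via $\ncS_S = \sum_{\Phi \in \FFF_S} \ncM_\Phi$ and the specialization $\ncM_\Phi(\mathbf{1}^k) = \binom{k}{\ell(\Phi)}$) and the geometric route (via Ehrhart polynomials of half-open unimodular simplices). Either route immediately delivers the stated expansion.
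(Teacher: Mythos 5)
Your proposal is correct and follows essentially the same route as the paper: decompose the solution set by order classes (using the invariance of $S$ on each cone $C(\Phi)$), count each class as $\binom{k}{\ell(\Phi)}$, and group by number of blocks, which is exactly the algebraic/geometric argument the paper summarizes before stating the theorem. Your direct bijective count of $C(\Phi)\cap[k]^n$ via choosing $\ell$ increasing values from $[k]$ is just an elementary restatement of the specialization $\ncM_\Phi(\mathbf{1}^k)=\binom{k}{\ell(\Phi)}$ and the Ehrhart computation already in the text.
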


Note that the vector $(0,f_1,\ldots,f_n)$ is the $f^*$-vector, as defined in \cite{fstar}, of the Ehrhart function of the subcomplex of the unit cube $(0,1]^n$ that satisfies $S$. We will pursue this perspective further by defining the allowable configuration in the next section. We note that we will have occasion to work with the open cube $(0,1)^n$ and a shift of the Ehrhart polynomial according to $\ehr_{\Delta^\ell_\ell}(k)=\ehr_{\Delta^\ell_{\ell+1}}(k+1)$. These two approaches are interchangeable.

\begin{example}[Graph Coloring]
A particularly familiar example of a scheduling problem is graph
coloring.  Given a finite graph $G = (V,E)$, a $k$-coloring of $G$ is
an assignment $\phi(G): V \rightarrow [k]$ such that for all edges
$\{v_i,v_j\} \in E$, $\phi(v_i) \neq \phi(v_j)$.  Namely, a
$k$-coloring colors the vertices of a graph with at most $k$-colors
such that if two vertices are joined by an edge, then they are given
different colors.  As a scheduling problem, the edges of the graph
give strict atomic formulas: for all edges $\{v_i,v_j\} \in E$, $x_i
\neq x_j$.  

The chromatic nc-quasisymmetric functions are in fact symmetric.  The
chromatic symmetric functions in non-commuting variables were
introduced by Gebhard and Sagan~\cite{GS}.  Allowing the variables to
commute yields the chromatic symmetric function introduced by
Stanley~\cite{stan-chromatic}.  The scheduling counting function is
the well studied chromatic polynomial.  We point out that the
chromatic counting function is usually established to be a polynomial
using a contraction deletion argument on the edges of a graph.  Our
method instead establishes this function as a polynomial via a
specialization of a symmetric function and as an Ehrhart
polynomial. See also
\cite{Beck2006-iop,Breuer2009d,Breuer2011,BDK2012} for further use of
the Ehrhart perspective.
\end{example}

\begin{example}[Order polynomials] Let $(P, \leq)$ be a finite poset.   The order polynomial $\Omega(P;k)$ is the number of order preserving maps from $P$ to $[k]$.  Define a scheduling problem $S$ by taking the conjunction of all relations $x_a \leq x_b$ for $a,b \in P$ with $a\leq b$.  Then  the scheduling polynomial of $S$ is the order polynomial of $P$.  
\end{example}

\section{The space of solutions and Hilbert series}
\label{sec:allow-forb-configurations}
Geometrically, the braid arrangement induces subdivisions
 $\TC$, and $\TS$ of the open unit cube $(0,1)^n$ and of the $(n-2)$-dimensional sphere $S^{n-2}$. 
 The faces of $\TC$ are relatively open unimodular simplices. The
 faces of $\TS$ are sections of the $(n-2)$-sphere. For both, the faces
 are in one-to-one correspondence with the ordered set partitions
 $\Phi$ of $[n]$ into non-empty parts, except in $\TS$ there is no
 face corresponding to to the partition with only one part.
 Combinatorially, $\TC$ is obtained by coning over $\TS$.  
 We will frequently draw no clear distinction between them and refer loosely to the triangulation $\Tn$.
Write $\sigma_{\Phi}\in\Tn$ to denote the face of the triangulation $\Tn$ corresponding to the ordered set partition $\Phi$. 
Define the \emph{allowed configuration} $\allow(S)$ of the triangulation $\Tn$ to be the set of faces 
\[
  \allow(S) = \mset{\sigma_\Phi \in \Tn}{S(\Phi) \textrm{ holds}}.
\]
Correspondingly, define the \emph{forbidden configuration} $\forb(S)$ to be the set of faces
\[
  \forb(S) = \mset{\sigma_\Phi \in \Tn}{\neg S(\Phi) \textrm{ holds}}.
\]

\begin{example}[Coloring Complex] As remarked above, a particularly familiar scheduling polynomial is the chromatic polynomial of a graph $G$.  In this case, the chromatic scheduling problem $S_G$ specifies which variables can not be equal to each other, $x_i \neq x_j$ for $\{v_i,v_j\}$ an edge of the graph, which simply means that no two jobs that are connected by an edge are allowed to run simultaneously.   Steingr\'{i}msson's coloring complex~\cite{Ein} can be described as
the collection of ordered set partitions with at least one edge in at least one block, namely 
the forbidden configuration of the chromatic scheduling problem. In our framework, Steingr\'{i}msson's coloring complex is the forbidden complex of the chromatic scheduling problem taken as a subcomplex of the sphere $S^{n-2}$.
\end{example}
Much work has been done to understand the coloring complex in particular to better understand the chromatic polynomial.  This avenue is possible because of the Hilbert series connection as shown in~\cite{Ein}.  As we will see below this connection holds more generally for \emph{all} scheduling problems. 

Let  $\SSS_{\NN^n}$
denote the nc-quasisymmetric function corresponding to all lattice points in the positive orthant and note that $(k-1)^n$ is the Ehrhart polynomial of the open cube $(0,k+1)^n$. 
If  $S$ is a scheduling problem on $n$ items, then 
 $ \poly_{S}(k) + \poly_{\neg S}(k) = (k-1)^n$ and 
$  \SSS_S + \SSS_{\neg S} = \SSS_{\NN^n}$.

Given numbers $f_0,\ldots,f_n$, the \emph{$h$-vector} $h_0,\ldots,h_{n+1}$ and the \emph{$h^*$-vector} $h^*_0,\ldots,h^*_{n}$ are defined, respectively, via $h_0=1$ and
\begin{align*}
  \sum_{i=0}^{n} f_i \binom{k-1}{i} & = 
  \sum_{i=0}^{n+1} h_i \binom{k+n-i}{n} 
  = \sum_{i=0}^{n} h^*_i \binom{k+n-i}{n}. 
\end{align*}
Typically, the numbers $f_0,\ldots,f_n$ are either the $f$-vector of a (partial) simplicial complex\footnote{That is, $f_i$ counts the number of $i$-dimensional faces of the complex.} or the coefficients of a polynomial $p(k)$ given in the binomial basis $p(k)=\sum_{i=0}^{n} f_i \binom{k-1}{i}$. Given $p(k)$ in this form, the $h$- and $h^*$-vectors can be defined, equivalently, by 
\begin{align*}
1 + \sum_{k=1}^\infty p(k) t^k & =  \frac{\sum_{i=0}^{n+1}h_i t^i}{(1-t)^{n+1}} \;\;\; \text{ and } \;\;\; 
\sum_{k=0}^\infty p(k) t^k  =   \frac{\sum_{i=0}^{n}h^*_i t^i}{(1-t)^{n+1}}. 
\end{align*}

\begin{theorem}
\label{hilbert}
Let $S$ be a scheduling problem on $n$ items. Then the $h$-vector of the shifted scheduling polynomial $\poly_S(k-1)$ is the $h$-vector of the allowed configuration $\allow(S)$ and the $h$-vector of the polynomial $(k-2)^n-\poly_S(k-1)$ is the $h$-vector of the forbidden configuration $\forb(S)$, i.e.,
\begin{align*}
  h(\poly_S(k-1)) &=  h(\allow(S)),   \text{ and}\\
  h((k-2)^n - \poly_S(k-1)) &=  h(\forb(S)),
\end{align*}
or equivalently,
\begin{align*}
 1 + t\sum_{k \geq 0} \poly_S(k) t^k &=  \frac{ h_{(\allow(S))}(t)}{(1-t)^{n+1}}, \text{ and} \\
 1 + t\sum_{k \geq 0} \left( (k-1)^n - \poly_S(k) \right) t^k &=  \frac{ h_{(\forb(S))} (t)}{(1-t)^{n+1}},
\end{align*}
where $h_{\Delta}(t)=\sum_{i} h_i(\Delta) t^i$ is the $h$-polynomial of ${\Delta}$.
\end{theorem}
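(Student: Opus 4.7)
The plan is to prove the allowed-configuration identity by a direct Hilbert-series computation starting from Theorem~\ref{fbasis}, and then to deduce the forbidden-configuration identity from the allowed one applied to the scheduling problem $\neg S$, using the complementation $\poly_S(k) + \poly_{\neg S}(k) = (k-1)^n$ noted immediately before the theorem together with the fact $\forb(S) = \allow(\neg S)$.

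For the allowed case I work with the Hilbert-series formulation. Substituting $\poly_S(k) = \sum_{\Phi\colon S(\Phi)}\binom{k}{\ell(\Phi)}$ from Theorem~\ref{fbasis} into $1+t\sum_{k\ge 0}\poly_S(k)t^k$ and applying the standard identity $\sum_{k\ge 0}\binom{k}{\ell}t^k = t^\ell/(1-t)^{\ell+1}$ gives, after clearing to the common denominator $(1-t)^{n+1}$,
\[
1+t\sum_{k\ge 0}\poly_S(k)t^k \;=\; \frac{(1-t)^{n+1}+\sum_{\Phi\colon S(\Phi)}t^{\ell(\Phi)+1}(1-t)^{n-\ell(\Phi)}}{(1-t)^{n+1}}.
\]
The remaining task is to recognize the numerator as $h_{\allow(S)}(t)$. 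Working in the cube model $T_{(0,1]^n}$, each face $\sigma_\Phi$ is an $\ell(\Phi)$-dimensional simplex, so after adjoining the empty face the complex $\allow(S)$ has $f$-vector $(f_{-1},f_0,f_1,\dots,f_n) = (1,0,f_1,\dots,f_n)$ and dimension $n$, with the $f_j$ of Theorem~\ref{fbasis}. Substituting this $f$-vector into the standard formula $h_\Delta(t) = \sum_{i=0}^{n+1}f_{i-1}t^i(1-t)^{n+1-i}$ for the $h$-polynomial of an $n$-dimensional complex reproduces the numerator exactly. The binomial-basis version $h(\poly_S(k-1)) = h(\allow(S))$ then follows from the $h$-vector/generating-function dictionary stated just before the theorem.

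For the forbidden configuration, apply the already-proved allowed case to $\neg S$; substituting $\poly_{\neg S}(k) = (k-1)^n - \poly_S(k)$ produces the Hilbert-series form of the second identity, and the binomial-basis form $h((k-2)^n - \poly_S(k-1)) = h(\forb(S))$ is its translation under $k \mapsto k-1$.

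The main obstacle is the $f$-vector bookkeeping for $\allow(S)$: in the cube triangulation every nonempty face $\sigma_\Phi$ has dimension $\ell(\Phi) \ge 1$, so $\allow(S)$ carries no $0$-dimensional faces, and the standard $h$-polynomial machinery must be applied with the unusual $f$-vector $(1,0,f_1,\dots,f_n)$. Once this convention is pinned down, the identification of the numerator with $h_{\allow(S)}(t)$ is purely mechanical; a secondary bit of care is required when translating between the Hilbert-series and binomial-basis forms and when invoking the complementation identity together with the $k \mapsto k-1$ shift.
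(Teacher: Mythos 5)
Your proof is correct and takes essentially the same route as the paper: both arguments identify the binomial-basis coefficients $f_i$ of $\poly_S$ from Theorem~\ref{fbasis} with the numbers of $i$-dimensional faces of $\allow(S)$ in the cube triangulation and then apply the standard $f$-to-$h$ generating-function dictionary, your explicit numerator computation being just the unwound form of the paper's Ehrhart-series step. The only cosmetic difference is that you deduce the forbidden-configuration identity by applying the allowed case to $\neg S$, while the paper treats $\forb(S)$ symmetrically via $\ehr_{\forb(S)}(k+1)=(k-1)^n-\poly_S(k)$; these are interchangeable given the complementation identity stated just before the theorem.
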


\begin{proof}
Let $S$ be a scheduling problem, then
\[
  \poly_S(k) = \ehr_{\allow(S)}(k+1) \;\;\; \text{ and } \;\;\; (k-1)^n - \poly_{S}(k) = \ehr_{\forb(S)}(k+1).
\]
Here, the allowed and forbidden configurations $\allow(S)$ and $\forb(S)$ are partial subcomplexes of $\TC$ which is an integral unimodular triangulation of the open unit cube $(0,1)^n$.  In particular, as seen in Theorem~\ref{fbasis}, the coefficients $f_i$ in (\ref{eqn:fbasis}) count the number of $i$-dimensional simplices in the respective partial complexes. Therefore, the $h$-vectors of these partial complexes coincide with the $h$-vectors of their Ehrhart polynomials,
\begin{align*}
 1 + t\sum_{k = 0}^\infty \poly_S(k) t^k &=  1 + \sum_{k = 1}^\infty \ehr_{\allow(S)}(k) t^k = \frac{\sum_{i=0}^{n+1} h_i(\allow(S)) t^i}{(1-t)^{n+1}},\\
 1 + t\sum_{k = 0}^\infty \left( (k-1)^n - \poly_S(k) \right) t^k &=  1 + \sum_{k = 1}^\infty \ehr_{\forb(S)}(k) t^k = \frac{\sum_{i=0}^{n+1} h_i(\forb(S)) t^i}{(1-t)^{n+1}},
\end{align*}
giving the stated results. Analogous statements about the $h^*$-vector are straightforward to derive using the same technique.
\end{proof}

 Theorem~\ref{hilbert}
was established for $\chi_S$ equal to the chromatic polynomial and
$\forb(S)$ equal to the coloring complex in~\cite{Ein}. Our geometric construction specializes to the one given for the chromatic polynomial of a graph in \cite{Beck2006-iop} and for the chromatic polynomial of a hypergraph in \cite{BDK2012}.
Theorem~\ref{hilbert} was also established for all scheduling problems
in which $\forb(S)$ forms a proper subcomplex (i.e. is closed under taking faces) of the Coxeter complex
in~\cite{ABK}.  Note that this is quite restrictive; the solutions to scheduling problems often do not satisfy such closure properties. 

 Theorem~\ref{hilbert} allows one to prove results on
 scheduling polynomials by studying the geometry of the space of
solutions.  We follow this approach in the next section.

\section{Partitionability and Positivity}
\label{geometry}

An important class of results in the literature on quasisymmetric
functions, simplicial complexes and Ehrhart theory are theorems
asserting the non-negativity of coefficient vectors in various bases.
Here we consider the expansions of the scheduling nc-quasisymmetric,
symmetric and polynomial functions in several bases and the relations
of these expansions to the geometry of the allowable and forbidden configuration.  
Specifically, for scheduling problems given by boolean functions of a
particularly nice form, we are able to prove partitionability of the
allowed configuration, which in turn has strong implications for
$h$-vectors.

\subsection{Decision Trees} Decision trees are a very commonly used form of boolean expression. Intuitively, they are simply nested if-then-else statements. We will work with decision trees where the conditions in the if-clauses are inequalities of the form $x_i \leq x_j$ and the conditions in the leaves of the tree are conjunctions of such inequalities (either strict or weak).

\begin{definition} A \emph{leaf} is a boolean expression $\psi$ that is a conjunction of inequalities of the form $x_i \leq x_j$ or $x_i < x_j$. A \emph{decision tree} is a leaf, or a boolean expression of the form
\[
  \text{if $\varphi$ then $\psi_t$ else $\psi_f$}
\]
where $\varphi$ is an inequality of the form $x_i \leq x_j$ or $x_i < x_j$ and $\psi_t,\psi_f$ are decision trees.
\end{definition}

Figure~\ref{fig:decision-tree-with-cells} shows an example of the allowable configuration of a decision tree.
 In general, this region can be non-convex  and even non star-convex.

As decision trees are binary trees, it will be convenient to distinguish notationally between a node $v$ of a tree $S$ and the boolean expression $S_v$ at that node.
Every leaf $v$ of a decision tree corresponds to a conjunction of inequalities which we call the \emph{cell} at $v$. The cell at $v$ is the conjunction of $S_v$ (the inequalities given by the leaf $v$ itself) and the constraints given in the if-clauses of ancestors of $v$, negated according to whether $v$ resides in the ``true" or ``false" branch of the corresponding if-then-else clause. Let $v$ denote a leaf of the tree and let $v_0,\ldots,v_k$ denote its ancestors. Then $S_{v_i}$ is a boolean formula of the form ``if $\varphi$ then $\psi_t$ else $\psi_f$'' and we define $\varphi_{v_i}' := \varphi$ if $v$ resides in the branch $\psi_t$ and $\varphi_{v_i}' := \neg\varphi$ if $v$ resides in the branch $\psi_f$. We denote by $C_v$ the conjunction
\[
 C_v := \varphi_{v_0}'\wedge\ldots\wedge\varphi_{v_k}'\wedge S_v\wedge \bigwedge_{i=1}^n (0 < x_i < 1).
\]
By a slight abuse of notation, we will also use $C_v$ to denote the polytope of all $x\in \RR^n$ satisfying $C_v$. In the above formula, the purpose of $\bigwedge (0 < x_i <1)$ is to ensure that all solutions $x$ lie in the open cube $(0,1)^n$, as is usual in the Ehrhart theory setting. When working with quasisymmetric functions, this condition would be replaced with $\bigwedge (0 < x_i)$ so as to ensure that solutions are positive. In this case the solution sets $C_v$ are cones.

\begin{figure}[h]
\includegraphics[width=1.7in,angle=270]{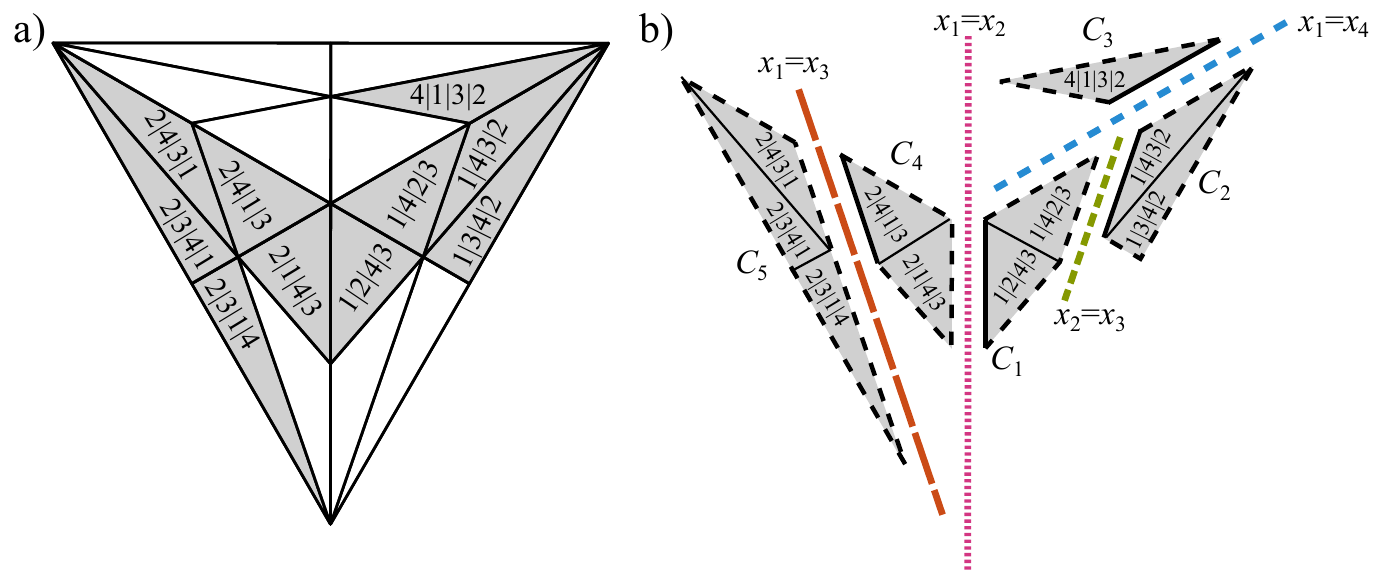}
\caption{Two views of the allowable configuration of a decision tree.}
\label{fig:decision-tree-with-cells}
\end{figure}
To illustrate these definitions, consider the following example which is given in Figure~\ref{fig:decision-tree-with-cells}.
\begin{lstlisting}[mathescape,basicstyle=\sffamily\small]
if $x_1\leq x_2$ then
    if $x_1 < x_4$ then
        if $x_2 < x_3$ then $x_4 < x_3$ else ($x_4 < x_2$ and $x_1 < x_3$)
    else
        $x_3 < x_2$ and $x_1 < x_3$
else
    if $x_1 \leq x_3$ then ($x_2 < x_4$ and $x_4 < x_3$) else ($x_1 < x_4$ and $x_2 < x_3$)
\end{lstlisting}
 The cells of the tree are the following (up to conjunctions of the form $\bigwedge (0 < x_i <1)$).
\begin{align*}
C_1 &= (x_1\leq x_2) \wedge (x_1 < x_4) \wedge (x_2 < x_3) \wedge (x_4 < x_3), \\
C_2 &= (x_1\leq x_2) \wedge (x_1 < x_4) \wedge (x_3 \leq x_2) \wedge (x_4 < x_2) \wedge (x_1 < x_3), \\
C_3 &= (x_1\leq x_2) \wedge (x_4 \leq x_1) \wedge (x_3 < x_2) \wedge (x_1 < x_3), \\
C_4 &= (x_2 < x_1) \wedge (x_1 \leq x_3) \wedge (x_2 < x_4) \wedge (x_4 < x_3), \\
C_5 &= (x_2 < x_1) \wedge (x_3 < x_1) \wedge (x_1 < x_4) \wedge (x_2 < x_3).
\end{align*}
 A disjunction $\bigvee_i C_i(x)$ is called \emph{disjoint}, if the solution sets are disjoint, i.e., if for every $x$ at most one of the $C_i(x)$ is true. The \emph{dimension} of a conjunction $C$ of inequalities is the dimension of the polyhedron of all solutions.
We call a formula $C$ \emph{almost open}, if it is equivalent to a conjunction $\bigwedge_j I_{j}$ of inequalities $I_j$ such that at most one of the $I_j$ is weak and all of the $I_j$ are facet-defining.

We are now ready to show partitionability of the allowed configuration associated to certain decision trees.
We will work in the general setting of partial simplicial complexes.
 A \emph{partial simplicial complex} is a pair
$\Delta=(\bar{\Delta},F)$ where $\bar{\Delta}$ is a simplicial complex
and $F$ an arbitrary subset of the faces of $\bar{\Delta}$. By
convention, we assume that $F$ contains all maximal faces of
$\bar{\Delta}$ and that $F$ does not contain the empty face.
A partial simplicial complex is pure if all maximal faces have the same dimension.

\begin{figure}[h]
\includegraphics[angle=270,width=3.3in]{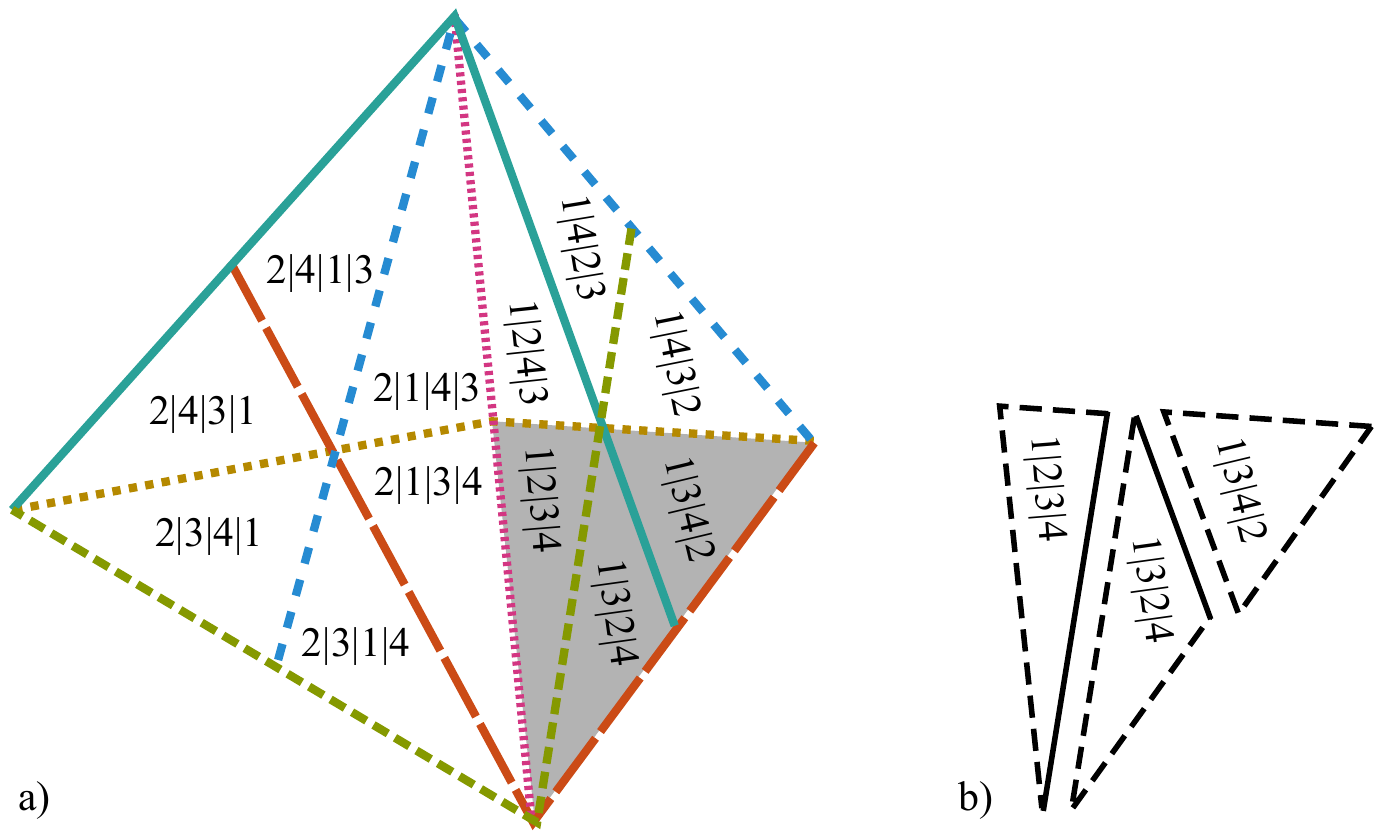}
\caption{
\label{fig:poset}
Partitionability
}
\end{figure}

A pure
 $d$-dimensional partial simplicial complex $\Delta$ is
\emph{partitionable} if it can be written as a disjoint union of half-open
$d$-dimensional simplices. For example, the open region shaded in Figure~\ref{fig:poset}a can be decomposed into half-open simplices as shown in Figure~\ref{fig:poset}b. 
 The $h^*$-vector $(h^*_0,\ldots,h^*_{d+1})$ of a partitionable complex records the
numbers $h^*_j$ of half-open simplices with $j$ open faces in the
partition.
Equivalently, partitionability can also be defined in terms of the face
poset. A pure $d$-dimensional partial simplicial complex $\Delta$ is
partitionable if the face poset can be decomposed as a disjoint union
of intervals $I_i = [l_i, u_i]$ such that for all $i$, $u_i$ has
dimension $d$.  There is a subtlety however working in the setting of
partial complexes; the intervals $I_i$ must partition the set $F$ but
the empty face is ignored.  The $h^*$-vector $(h^*_0,\ldots,h^*_d)$ of
a partition records the numbers $h^*_j$ of intervals $I_i$ with
$\dim(l_i)=j-1$.  To see that these two definitions are equivalent,
note that an interval $I_i$ with $\dim(l_i)=j-1$ is the face poset of
a half-open simplex with $j$ open faces. For closed simplicial
complexes (where $F$ contains all faces), the standard notion of
partitionability \cite{Lee2004,Ziegler1995} can be obtained as a
special case of the above by requiring that $h^*_0=1$.

\begin{theorem}
If a scheduling problem $S$ is equivalent to a formula of the form
\begin{equation}
\label{eqn:disjunction-of-conjunctions}
  S \equiv \bigvee_i C_i \text{ where } C_i = \bigwedge_j I_{i,j}
\end{equation}
where the disjunction is disjoint and the $C_i$ are almost open of dimension $d$, then $\allow(S)$ is partitionable.
\end{theorem}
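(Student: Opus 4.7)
The plan is to construct the partition one $C_i$ at a time. Since $\bigvee_i C_i$ is a disjoint disjunction equivalent to $S$, the face sets $\{\sigma_\Phi \in \Tn : \Phi \text{ solves } C_i\}$ already partition $\allow(S)$, so it suffices to write each such face set as a disjoint union of intervals of the face poset whose top elements have dimension $d$. Fixing one $C_i$, I would first enumerate its top-dimensional faces: these are exactly the chambers $\sigma_u$ of the braid arrangement lying in $C_i$, in bijection with the linear extensions $u$ of the partial order on $[n]$ imposed by the inequalities of $C_i$.

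For each such $\sigma_u$ I have to decide which of its $n-1$ codim-one coarsenings are declared closed in the interval attached to $u$. My rule is: a facet of $\sigma_u$ lying on an open (strict) boundary facet of $C_i$ is open; the unique facet, if any, lying on the weak boundary facet of $C_i$ is closed; and a facet shared with another chamber of $C_i$ is oriented using a fixed generic linear functional $\lambda(x) = \sum_i v_i x_i$ with all $v_i$ distinct, declared closed on the $\lambda$-positive chamber and open on the other. Letting $M_u \subseteq [n-1]$ denote the set of gaps of $u$ whose facet is closed, the interval attached to $u$ is the set of coarsenings $\Phi_M$ of $u$ obtained by merging the gaps in some subset $M \subseteq M_u$; this is a genuine interval $[l_u, \sigma_u]$ of the face poset of $\Tn$.

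The main obstacle is to verify that (i) every such $\Phi_M$ with $M \subseteq M_u$ still solves $C_i$, and (ii) the intervals so defined partition the face set of $C_i$. Disjointness in (ii) follows directly from the orientation rule, since every interior facet is closed in exactly one of its two neighboring chambers and the boundary cases are unambiguous. The essential content lies in (i): one must preclude the possibility that a strict inequality $x_a < x_b$ of $C_i$ becomes an equality after merging a subset $M \subseteq M_u$ with $a$ and $b$ non-adjacent in $u$. Here the almost-open, facet-defining hypotheses are crucial---facet-definingness forces each strict inequality of $C_i$ to cut a true facet of $C_i$, and global consistency of $\lambda$ ensures that among the gaps of $u$ spanned by any such strict inequality at least one is oriented away from $\sigma_u$ and hence excluded from $M_u$. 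The ``at most one weak'' clause is what keeps the closed-boundary contribution to a single facet per interval, so that each resulting piece has the shape $\Delta^d_j$ of a half-open simplex from Section~\ref{sec:scheduling-problems}. Should this direct verification prove unwieldy, a structurally cleaner alternative is to recognize each $C_i$ as a labeled poset in Gessel's sense and invoke the fundamental theorem on P-partitions, which packages precisely this decomposition into pieces indexed by linear extensions.
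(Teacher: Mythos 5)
Your reduction to a single cell $C_i$ and the identification of its maximal faces with linear extensions are fine, but the central step of your construction is not just unproved --- it is false as stated. An \emph{arbitrary} generic functional $\lambda(x)=\sum_i v_ix_i$ does not guarantee your property (i). Take $n=3$ and the almost open cell $C$ given by the single strict inequality $x_1<x_3$ (with $x_2$ unconstrained; the cube constraints are strict and facet-defining, so $C$ satisfies the hypotheses). Consider the chamber $\sigma_u$ with $u$ given by $x_1<x_2<x_3$: both of its gaps are interior facets of $C$ (shared with the chambers $x_2<x_1<x_3$ and $x_1<x_3<x_2$), and for a generic $v$ that is monotone along $u$ (in whichever direction your sign convention dictates) both facets are declared closed in $\sigma_u$. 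Then $M_u$ contains both gaps, and the interval attached to $u$ contains the face $x_1=x_2=x_3$, which violates the strict inequality $x_1<x_3$ and does not lie in $\allow(S)$. So your claim that ``global consistency of $\lambda$ ensures that among the gaps of $u$ spanned by any strict inequality at least one is oriented away from $\sigma_u$'' is exactly the step that breaks; facet-definingness does not help. (Your disjointness argument is also only about codimension-one coarsenings; for faces of higher codimension disjointness of the intervals requires the sorting argument of P-partition theory, not merely that each interior facet is closed in one of its two neighbouring chambers.)

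The construction can be repaired, but the repair is precisely where the almost open hypothesis must enter and where your sketch is silent: $\lambda$ must be chosen \emph{per cell}, so that every strict facet inequality $x_a<x_b$ of $C$ is a $\lambda$-descent ($v_a>v_b$) and the (at most one) weak inequality is a $\lambda$-ascent. Such a $v$ exists because at most one inequality is weak: the only obstruction would be a directed cycle consisting of the weak cover together with a chain of strict covers, which cannot occur since a cover relation admits no parallel chain. With a compatible $\lambda$ your rule becomes exactly the fundamental decomposition of $(P,\omega)$-partitions, i.e.\ your own fallback via Gessel --- but note that this fallback also needs the existence argument above, since not every assignment of strict/weak edges to a poset is realizable by a labeling $\omega$; this, rather than ``keeping one closed boundary facet per piece,'' is the true role of the ``at most one weak'' clause. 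For comparison, the paper proves the key Lemma~\ref{lem:almost-open} by a different, more geometric route: the induced triangulation of the cell is regular, a new vertex is placed beyond the unique closed facet, a line shelling of a lift is chosen that shells the new simplices first, and reversing the resulting partition and discarding the new simplices partitions the cell. A corrected labeled-poset version of your argument would be a genuinely more combinatorial alternative, but as written the proposal has a real gap.
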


Because the disjunction is disjoint, this theorem follows immediately, if we can prove it for almost open conjunctions.

\begin{lemma}
\label{lem:almost-open}
An almost open conjunction $C$ of dimension $d$ is partitionable.
\end{lemma}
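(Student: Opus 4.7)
The plan is to construct the partition geometrically, by exhibiting a half-open decomposition of the polytope $P = \{x \in (0,1)^n : C(x)\}$ along the braid arrangement. Because $C$ is almost open, every defining inequality is facet-defining, so $P$ is full-dimensional of dimension $d = n$; its closure $\overline{P}$ is a closed polytope with facets supported on the hyperplanes $x_i = x_j$ of the defining inequalities (plus the cube bounds), and $\overline{P} \setminus P$ is the union of those facets of $\overline{P}$ corresponding to the strict inequalities of $C$. At most one facet of $\overline{P}$, namely the one coming from the weak inequality of $C$ (if present), lies inside $P$. Since the braid arrangement already contains the supporting hyperplanes of $\overline{P}$, it restricts to a unimodular triangulation of $\overline{P}$ whose maximal cells are the closed simplices $\overline{\sigma_\pi}$ indexed by the permutations $\pi$ satisfying $C$; the relatively open cells of this triangulation that lie in $P$ are precisely $\allow(C)$.

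Next, I would fix a linear functional $\omega \in (\RR^n)^*$ that is generic with respect to the braid arrangement and that points into $\overline{P}$ from the weak-inequality facet of $P$ (when such a facet exists). For each point $x \in P$, assign $x$ to the unique maximal cell $\overline{\sigma_\pi}$ of the triangulation containing $x + \varepsilon \omega$ for all sufficiently small $\varepsilon > 0$. Collecting all $x$ assigned to $\overline{\sigma_\pi}$ produces a half-open simplex $T_\pi$: concretely, $T_\pi$ is $\overline{\sigma_\pi}$ with those facets removed whose points are pushed by $\omega$ into a neighboring maximal cell or out of $P$ altogether. By a standard fact about simplices, $T_\pi$ equals the face-poset interval $[l_\pi, \sigma_\pi]$, where $l_\pi$ is the intersection of the removed facets.

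The crux, and the main obstacle, is to verify that the $T_\pi$ partition $P$ set-theoretically, equivalently that the intervals $[l_\pi, \sigma_\pi]$ partition $\allow(C)$ in the face poset. Points in the relative interior of a maximal cell lie in exactly one $T_\pi$; points on interior codim-$1$ faces of the triangulation are disambiguated by the $\omega$-rule; and points on the weak-inequality facet of $P$ (if any) are assigned to their unique adjacent maximal cell thanks to the choice of $\omega$. Genericity of $\omega$ then propagates this consistency to all lower-dimensional faces of the triangulation. Because the open cells of the braid arrangement lying in $P$ are in bijection with the elements of $\allow(C)$, the geometric partition $P = \bigsqcup_\pi T_\pi$ translates directly into the desired partition of $\allow(C)$ into face-poset intervals, completing the proof.
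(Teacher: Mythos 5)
Your construction breaks at the step where you assert that each fiber $T_\pi$ ``is $\overline{\sigma_\pi}$ with those facets removed whose points are pushed by $\omega$ into a neighboring maximal cell or out of $P$ altogether.'' Points of $\overline{\sigma_\pi}$ can fail to lie in $P$ because they violate a \emph{strict} inequality of $C$ whose hyperplane meets $\overline{\sigma_\pi}$ only in a face of codimension at least two, not in a facet; such points are excluded from the fiber even though no facet containing them is removed, so $T_\pi$ need not be a half-open simplex and the corresponding set of faces need not be an interval $[l_\pi,\sigma_\pi]$. Concretely, take $n=3$ and $C=(x_1<x_2)\wedge(0<x_1)\wedge(x_2<1)\wedge(0<x_3)\wedge(x_3<1)$, an almost open (indeed fully open) cell, so your condition on $\omega$ reduces to genericity; take $\omega=(0.1,2,1)$, which is generic for all braid and cube hyperplanes. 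For the maximal simplex $\sigma$ given by $x_1\leq x_3\leq x_2$, neither interior wall $x_1=x_3$ nor $x_3=x_2$ is an exit facet (since $\omega_1<\omega_3<\omega_2$), so the relative interiors of both walls --- the faces $13|2$ and $1|23$, which satisfy $C$ --- are assigned to $\sigma$; but their common edge, the diagonal $x_1=x_2=x_3$, violates $x_1<x_2$ and is contained in no facet of $\sigma$ that your rule removes (the only facets of $\overline{\sigma}$ containing it are the two kept walls; the hyperplane $x_1=x_2$ supports $\overline{\sigma}$ only along that edge). The fiber of $\sigma$ is therefore $\{1|3|2,\,13|2,\,1|23\}$, an up-set with two minimal elements rather than an interval. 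So your rule does give a set-theoretic partition of $P$ --- that part, which you single out as the crux, is automatic once the assignment is well defined --- but it is not a partition into half-open simplices, which is the actual content of the lemma.

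The underlying problem is that genericity of $\omega$ plus pointing inward at the weak facet is not a sufficient condition: what is needed is that every face of every allowed maximal simplex on which some strict inequality of $C$ degenerates to an equality is covered by a removed facet of that simplex, and you neither impose this nor prove that a single linear functional achieving it always exists. (It cannot be forced by simply demanding that $\omega$ exit through every strict facet of $\overline{P}$; for instance $x_3=0$ and $x_3=1$ are both strict facets in the example above and have opposite normals.) In the example a good direction does exist, e.g.\ $\omega=(3,2,1)$, but a general existence argument is missing, and this is exactly the difficulty the paper's proof circumvents by other means: it uses regularity of the induced triangulation, adds an auxiliary vertex beyond the unique closed facet, takes a line shelling of a lift in which the added simplices come first, and then reverses the resulting half-open decomposition, so that the choice of which facets are open is governed by the shelling and may vary from simplex to simplex instead of being dictated by one fixed direction. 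A further small slip: $l_\pi$ is the intersection of the facets kept closed (equivalently, the face spanned by the vertices opposite the removed facets), not the intersection of the removed facets.
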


It is well-known that boundary complexes of simplicial polytopes are partitionable, a fact that can for example be shown using line-shellings \cite{Bruggesser1971}. This method can also be used to construct shellings (and thus partitions) of regular triangulations of polytopes \cite[Corollary~8.14]{Ziegler1995}. These techniques extend naturally to almost open polytopes. For the proof, we assume familiarity with regular triangulations, line-shellings and their connection to partitionability \cite{DeLoera2010,Lee2004,Ziegler1995}.

\begin{proof} 
We first deal with the case where the polytope $C$ has exactly one closed face. This implies that all but one of the facet-defining inequalities of $C$ are strict.

$C$ is a subconfiguration of the braid triangulation $\TC$ of the open cube.
 Therefore, the induced triangulation $T$ of the almost open polytope $C$ is regular. Let $F$ be the facet of $C$ which is closed, and let $z$ denote a new point close to $F$ but outside of $C$.  Define an open polytope $C'$ with $z$ as a new vertex by taking the open convex hull of $C$ and $z$:
\[
  C' = \operatorname{oconv}(C,z) := \mset{\lambda x + (1-\lambda)z}{x\in C, \lambda\in (0,1] }.
\]
 $C'$ is open because $(1-\lambda)<1$ and $F$ is the only closed face of $C$. The extended polytope $C'$ has a triangulation $T'$ which consists of all the simplices in $T$ and the open convex hulls $\operatorname{oconv}(\sigma,z)$ of simplices $\sigma\in T$ such that $\sigma\subset F$ with the new vertex $z$. 

The triangulation $T'$ is regular.\footnote{The new simplices $\sigma\in T'\setminus T$ are separated from $C$ by the hyperplane defining $F$. Moreover $T$ induces a regular triangulation of $F$ and thus the triangulation of $C'\setminus C$ induced by $T'$ is regular as well.} Therefore, there exists a polytope $P$ that has $C'$ as its lower hull. We now construct a line shelling of the lower hull of $P$ which starts with a facet in $T'\setminus T$. The lifted polytope $P$ can be chosen such that that all facets in $T'\setminus T$ are shelled first, again because $T$ is separated from $T'\setminus T$ by a single hyperplane, see also \cite[Lemma~2]{Breuer2011}.

Let $\sigma_1,\ldots,\sigma_N$ be the induced shelling order of the maximal-dimensional simplices in $C'$. Let $\bar{\sigma_i}$ denote the closure of $\sigma_i$. Then, the sequence of half-open simplices
\[
 (\nu_i)_{i=1,\ldots,N}\;  :=\;   \bar{\sigma_1}, \; \bar{\sigma_2}\setminus \bar{\sigma_1}, \; \bar{\sigma_3}\setminus  (\bar{\sigma_2} \cup \bar{\sigma_1}), \; \ldots, \; \bar{\sigma_N} \setminus \bigcup_{i=1}^{N-1} \sigma_i
\]
form a partition of the closed polytope $\bar{C'}$. By reversing which faces of these half-open simplices are open and which are closed, we obtain a partition 
\[
  (\tau_i)_{i=1,\ldots,N}\;  :=\; \bar{\sigma}_N\setminus \partial \bar{C'} ,\;  \bar{\sigma}_{N-1}\setminus (\partial \bar{C'} \cup\bar{\sigma}_N),\;  \ldots,\;  \bar{\sigma}_1 \setminus  (\partial \bar{C'} \cup \bigcup_{i=2}^N \bar{\sigma}_i)
\]
of the open polytope $C'$. Let $\tau_i$ denote the $i$-th element in this sequence.
 A face is open if it occurred in a previous simplex of the sequence or the boundary. Because we run through the sequence in reverse order, this flips the state of all faces versus the first partition.
By construction, the simplices in $T'\setminus T$ are the last half-open simplices in the sequence $(\tau_i)_i$, that is, there exists an $l$ such that $\sigma_i\subset C$ for all $i\leq l$ and $\sigma_i\subset C'\setminus C$ for all $i>l$. Then, the sequence $\tau_1,\ldots,\tau_l$ is a partition of $C$, as desired.

 The proof for the case that $C$ is an open polytope without closed faces is completely analogous, only simpler as there is no need to add the new vertex $z$.
\end{proof}

\begin{corollary}
\label{thm:decision}
Let $S$ be a decision tree such that all cells of $S$ are almost open. Then $\allow(S)$ is partitionable. Moreover, disjoint unions of such decision trees are partitionable.
\end{corollary}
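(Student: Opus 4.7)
The plan is to reduce Corollary~\ref{thm:decision} to the preceding theorem by exhibiting the cells of the decision tree as a disjoint disjunction of almost open conjunctions whose union is $\allow(S)$. Essentially all the geometric work has already been carried out in Lemma~\ref{lem:almost-open} and the theorem above; what remains is a short bookkeeping argument about the recursive structure of decision trees.

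First I would establish, by induction on the height of the tree, that $S$ is logically equivalent to $\bigvee_v C_v$ where $v$ ranges over the leaves. The base case is immediate since a leaf $v$ is by definition equivalent to $C_v$ (within the cube $(0,1)^n$). For the inductive step, an interior node of the form ``if $\varphi$ then $\psi_t$ else $\psi_f$'' is semantically equivalent to $(\varphi\wedge\psi_t)\vee(\neg\varphi\wedge\psi_f)$; applying the induction hypothesis to $\psi_t$ and $\psi_f$ and then distributing the $\varphi$ and $\neg\varphi$ conjuncts into the descendant cells recovers precisely the definition of $C_v$ for each leaf $v$.

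Next I would argue that the disjunction is disjoint. Given distinct leaves $v$ and $w$, their paths from the root must diverge at some least common ancestor, an interior node that branches on some condition $\varphi$; one path enters the ``true'' branch (contributing $\varphi$ to its cell) and the other the ``false'' branch (contributing $\neg\varphi$), so $C_v\wedge C_w$ is unsatisfiable. By hypothesis every cell is almost open, and almost open conjunctions are full-dimensional in $\RR^n$ (each defining inequality is facet-defining and at most one is weak), so the preceding theorem applies and $\allow(S)$ is partitionable. For the ``moreover'' clause, the cells of a disjoint union $S_1\sqcup\cdots\sqcup S_k$ are just the union of the cells of the individual trees; the disjointness hypothesis on the $\allow(S_i)$ together with the previous paragraph ensures these still form a pairwise disjoint family of almost open conjunctions, and one final application of the theorem finishes the argument.

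The only delicate point is the inductive bookkeeping in the first step, where one must track how the if-condition $\varphi$ gets conjoined to descendants in the ``true'' branch, its negation to descendants in the ``false'' branch, and how these accumulate along root-to-leaf paths to match the definition of $C_v$ given in the text. Once this identification is made, disjointness and full-dimensionality are essentially immediate, so I do not expect any serious obstacle beyond carefully setting up the induction.
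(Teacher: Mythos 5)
Your proposal is correct and follows essentially the same route as the paper: the paper's proof simply observes that a decision tree is the disjoint union of its cells and then invokes Lemma~\ref{lem:almost-open} (equivalently, the preceding theorem) on the almost open cells. Your induction on tree height and the least-common-ancestor disjointness argument just spell out the bookkeeping that the paper treats as immediate.
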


\begin{proof} 
A decision tree is the disjoint union of all its cells. By Lemma~\ref{lem:almost-open}, all cells are partitionable because they are almost open. Therefore the decision tree is partitionable.
\end{proof}

\subsection{Fundamental Bases for NCQSym and QSym}
In this section we prove that partitionability
of the allowable configuration implies positivity of the scheduling
(nc-)quasisymmetric function in the (nc-)fundamental basis, see Theorem~\ref{thm:part-pos}.
First we recall these expansions.

 Let $\Phi_\text{c}$ and $\Phi_\text{f}$ (standing for coarse and fine) be two ordered set partitions such that $\Phi_\text{f}$ is a permutation, i.e., an ordered set partition of maximal length into blocks of size one, that refines $\Phi_\text{c}$. Then the poset of all ordered set partitions $\Phi$  between $\Phi_\text{c}$ and $\Phi_\text{f}$ under the refinement relation forms a boolean lattice of dimension $n-\ell(\Phi_\text{c})$ where $\ell$ is the length of $\Phi_\text{c}$. 
 Thus, as $(\Phi_\text{c};\Phi_\text{f})$ ranges over all such pairs of ordered set partitions, which are comparable under the refinement relation and  where $\Phi_\text{f}$ has length $n$, the functions
\[
  \ncL_{(\Phi_\text{c};\Phi_\text{f})} := \sum_{\Phi_\text{c}\leq \Phi \leq \Phi_\text{f}} \ncM_\Phi
\]
 form a generating system of the linear space of nc-quasisymmetric functions. However, they do not form a basis as there are multiple representations of the same function, for example
\[
  \ncL_{(1|2|3;1|2|3)} + \ncL_{(1|23;1|3|2)} =   \ncL_{(1|23;1|2|3)} +  \ncL_{(1|3|2;1|3|2)} = \ncM_{1|2|3} + \ncM_{1|23} + \ncM_{1|3|2}.
\]
We call this the \emph{fundamental generating system} of the nc-quasisymmetric functions. To obtain a basis, we must fix a choice for $\Phi_\text{f}$ given $\Phi_\text{c}$.  In particular, for any ordered set partition $\Phi$, let $\hat{\Phi}$ denote the permutation refining $\Phi$ with the property that the elements of each part of $\Phi$ are listed in order. For example, if $\Phi=35|247|16$ then $\hat{\Phi}=3|5|2|4|7|1|6$. 
Given an ordered set partition, define
 $\ncL_\Phi:=\ncL_{(\Phi;\hat{\Phi})}.$ As $\Phi$ ranges over all ordered set partitions, the functions $\ncL_\Phi$ form a basis which we call the \emph{nc-fundamental basis}.

Alternatively, this fundamental basis for nc-quasisymmetric functions can be defined in terms of a \emph{directed refinement relation} $\preceq$ on ordered set partitions given by $(\Phi_1, \ldots,\Phi_{i-1}, \Phi_i \cup \Phi_{i+1}, \Phi_{i+2}, \ldots, \Phi_k) \lessdot \Phi$ where every element of the $i$-th block is less than every element of the $(i+1)$-st block.  Then, the nc-fundamental basis for NCQSym can be defined by
$$\mathcal{L}_{\Phi} := \sum_{\Psi:\Phi \preceq \Psi} \ncM_{\Psi},$$
where $\Phi$ ranges over all ordered set partitions. 
We note that this order is opposite to the order $\leq_*$ used to define the basis ${\bf Q_{\Phi}}$ in~\cite{Zab}. 
Our choice of ordering is particularly motivated by its connection to the  fundamental basis $L$ of quasisymmetric functions (QSym); the $\ncL$ basis of NCQSym restricts to the $L$ basis of QSym when the variables are allowed to commute.  
Namely, recall that 
the fundamental quasisymmetric functions of QSym are defined from the monomial quasisymmetric functions as follows.  For any composition $\alpha$, 
$$L_{\alpha} := \sum_{\beta:\beta \textrm{ refines } \alpha} M_{\beta}.$$

 The \emph{type map} maps monomials of NCQSym to QSym by sending ordered set partitions to compositions.
 The type map of an ordered set partition simply records the size of each block:
 $\type(\Phi_1|\Phi_2|\cdots|\Phi_n) = (|\Phi_1|, |\Phi_2|, \cdots, |\Phi_n|).$
 If $\mathcal{S} \in$ NCQSym is written as a sum of monomial terms,
 applying the type map to each index is equivalent to allowing the
 variables to commute.
 Applying the type map in the $\ncL$ basis gives the corresponding quasisymmetric function in the $L$ basis in such a way that directed refinement on the level of nc-quasisymmetric functions corresponds to refinement on the level of quasisymmetric functions:
\begin{diagram}[small]
\text{NCQSym } \ncM & \rTo^{\text{ directed refinement }} & \text{NCQSym } \ncL\\
\dTo^{ \type} & & \dTo_{ \type}\\
\text{QSym } M & \rTo^{\text{refinement}} & \text{QSym } L
\end{diagram}

Let $\Delta$ be an $n$-dimensional half-open unimodular simplex with $i$ open facets, that, as a partial simplicial complex, is a subcomplex of $\TC$.
 The interval of ordered set partitions between $\Phi_\text{f}$ and $\Phi_\text{c}$ corresponds to the face poset of $\Delta$. If $\Phi_{\text{c}}$ is an ordered set partition of length $j$, then 
\[
  \ncL_{(\Phi_\text{c};\Phi_\text{f})}(\mathbf{1}^k) = \ncL_{\Phi_\text{c}}(\mathbf{1}^k) = \binom{k+n-j}{n} = \ehr_{\Delta^n_j}(k).
\]
The geometric reason behind the last inequality, is that monomials in $\ncL_{(\Phi_\text{c};\Phi_\text{f})}(\mathbf{1}^k)$ correspond precisely to the lattice point in the $k$-th dilate of an $n$-dimensional simplex with $j$ open faces in $T_{(0,1]^n}$. The difference to the construction in Section~\ref{sec:scheduling-problems} is that for the fundamental basis, all simplices have the same dimension, but the number of open faces varies.
This observation extends directly to quasisymmetric functions, i.e, we have 
\[
  L_\alpha = \type(\ncL_{\Phi_\alpha}) \;\;\; \text{ and } \;\;\; L_\alpha(\mathbf{1}^k) = \ncL_{\Phi_\alpha}(\mathbf{1}^k) = \binom{k+n-\ell}{n},
\]
 where $\Phi_\alpha$ denotes any ordered set partition with $\operatorname{type}(\Phi_\alpha) = \alpha$, and $\ell$ is the length of $\alpha$.

\begin{proposition}
Let $\mathcal{F}$ denote an nc-quasisymmetric function, let $F$ denote a quasisymmetric function and let $p$ denote a polynomial such that
\[
  \type(\mathcal{F}) = F \;\;\; \text{ and } \;\;\; \mathcal{F}(\mathbf{1}^k) = F(\mathbf{1}^k) = p(k).
\]
  Moreover, let $\mu_\Phi$, $\mu_{(\Phi_\text{c};\Phi_\text{f})}$ and $\lambda_\alpha$ denote the coefficient vectors of $\mathcal{F}$ and $F$ in terms of the fundamental bases and let $(0,h^*_1,\ldots,h^*_n)$ denote the $h^*$-vector of $p$, i.e.,
\begin{align*}
  \mathcal{F} &= \sum_\Phi \mu_\Phi \ncL_\Phi = \sum_{(\Phi_\text{c};\Phi_\text{f})} \mu_{(\Phi_\text{c};\Phi_\text{f})} \ncL_{(\Phi_\text{c};\Phi_\text{f})}, \\
  F &= \sum_{\alpha} \lambda_\alpha L_\alpha,\\
  p(k) &= \sum_{i=1}^n h^*_i \binom{k+n-i}{n}.
\end{align*}
Then
\[
  h^*_i 
  = \sum_{\Phi: \ell(\Phi) = i} \mu_\Phi 
  =
   \sum_{\substack{(\Phi_\text{c};\Phi_\text{f}): 
        \\ \Phi_\text{c} \leq \Phi_\text{f}
        \\ \ell(\Phi_\text{c}) = i
        \\ \ell(\Phi_\text{f}) = n
   }} \mu_{(\Phi_\text{c};\Phi_\text{f})}
  = \sum_{\alpha: \ell(\alpha)=i} \lambda_\alpha.
\]
The coefficients $h_i^*$, $\mu_\Phi$, $\mu_{(\Phi_\text{c};\Phi_\text{f})}$ and $\lambda_\alpha$ are integral but may be negative.  Non-negativity of the $\mu_\Phi$ or the $\mu_{(\Phi_\text{c};\Phi_\text{f})}$ implies non-negativity of the $\lambda_\alpha$, and, in turn, non-negativity of the $\lambda_\alpha$ implies non-negativity of the $h^*_i$.
\end{proposition}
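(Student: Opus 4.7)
The plan is to derive all three identities for $h^*_i$ from a single source: the specialization $p(k)=\mathcal{F}(\mathbf{1}^k)=F(\mathbf{1}^k)$ together with the fact, recalled just before the proposition, that every fundamental generator specializes to a binomial coefficient depending only on the length of its lower index:
\[
  \ncL_\Phi(\mathbf{1}^k) = \binom{k+n-\ell(\Phi)}{n}, \quad \ncL_{(\Phi_\text{c};\Phi_\text{f})}(\mathbf{1}^k) = \binom{k+n-\ell(\Phi_\text{c})}{n}, \quad L_\alpha(\mathbf{1}^k) = \binom{k+n-\ell(\alpha)}{n}.
\]
Substituting each of the three expansions of $\mathcal{F}$, $F$ into these formulas, grouping terms by length, and comparing with the definition of the $h^*$-vector of $p$ will immediately yield the three equalities.

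In more detail, first I would substitute the $\ncL$-expansion of $\mathcal{F}$ into $\mathcal{F}(\mathbf{1}^k)=p(k)$ to get
\[
  p(k) = \sum_\Phi \mu_\Phi \binom{k+n-\ell(\Phi)}{n} = \sum_{i=1}^n \Biggl(\sum_{\Phi:\,\ell(\Phi)=i} \mu_\Phi\Biggr) \binom{k+n-i}{n}.
\]
Since $\binom{k+n-i}{n}$, $i=1,\ldots,n+1$, is a basis of the space of polynomials in $k$ of degree at most $n$ that vanish at $k=0$ (after a shift this is precisely the basis in which $h^*$ is defined in the paper), and $p(0)=0$ forces the $i=0$ coefficient to be zero, uniqueness of the expansion yields $h^*_i=\sum_{\Phi:\ell(\Phi)=i}\mu_\Phi$. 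The identical argument applied to the $(\Phi_\text{c};\Phi_\text{f})$-expansion and to the $L$-expansion of $F$ gives the other two equalities; note that for the generating system the claim is that \emph{every} valid representation has the same sum on each length level, which is exactly what the uniqueness of the binomial coefficient expansion provides.

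For the non-negativity implications, the key is the commutative square relating the type map to the fundamental bases. Applying $\type$ to the $\ncL$-expansion of $\mathcal{F}$ and using $\type(\ncL_\Phi)=L_{\type(\Phi)}$ gives
\[
  F = \type(\mathcal{F}) = \sum_\Phi \mu_\Phi L_{\type(\Phi)} = \sum_\alpha \Biggl(\sum_{\Phi:\,\type(\Phi)=\alpha} \mu_\Phi\Biggr) L_\alpha,
\]
and by the analogous uniqueness for the $L$-basis of QSym we obtain $\lambda_\alpha=\sum_{\Phi:\type(\Phi)=\alpha}\mu_\Phi$; thus $\mu_\Phi\geq 0$ for all $\Phi$ implies $\lambda_\alpha\geq 0$ for all $\alpha$. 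The analogous sum $\lambda_\alpha = \sum_{(\Phi_\text{c};\Phi_\text{f}):\type(\Phi_\text{c})=\alpha,\,\ell(\Phi_\text{f})=n} \mu_{(\Phi_\text{c};\Phi_\text{f})}$ handles the generating-system case. Finally, $h^*_i=\sum_{\alpha:\ell(\alpha)=i}\lambda_\alpha$ makes the implication $\lambda_\alpha\geq 0 \Rightarrow h^*_i\geq 0$ immediate. Integrality is clear from the fact that each basis $\{\ncL_\Phi\}$, $\{L_\alpha\}$, $\{\binom{k+n-i}{n}\}$ is a $\mathbb{Z}$-basis of the corresponding lattice.

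There is no real obstacle here; the one mildly subtle point is that the pairs $(\Phi_\text{c};\Phi_\text{f})$ do not give a basis, so one must take care that the asserted equality involves a sum over \emph{all} representations of a fixed length rather than individual coefficients. This is handled precisely by reading the equation $p(k)=\mathcal{F}(\mathbf{1}^k)$ at the level of coefficients of $\binom{k+n-i}{n}$, which is insensitive to how one chooses to expand $\mathcal{F}$ in the generating system.
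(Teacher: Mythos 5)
Your proof is correct and follows essentially the same route as the paper, which justifies the proposition by the facts recorded immediately before it: the specializations $\ncL_{(\Phi_\text{c};\Phi_\text{f})}(\mathbf{1}^k)=\ncL_{\Phi_\text{c}}(\mathbf{1}^k)=\binom{k+n-\ell(\Phi_\text{c})}{n}$ and $L_\alpha(\mathbf{1}^k)=\binom{k+n-\ell(\alpha)}{n}$, uniqueness of the expansion in the binomial basis, and the type-map compatibility $\type(\ncL_\Phi)=L_{\type(\Phi)}$. One trivial slip: the polynomials $\binom{k+n-i}{n}$ of degree at most $n$ vanishing at $k=0$ are those with $1\leq i\leq n$ (for $i=n+1$, $\binom{k-1}{n}$ equals $(-1)^n$ at $k=0$), but since every length occurring in the expansions lies in $\{1,\ldots,n\}$ this does not affect your argument.
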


We now bring in the geometry of the allowed and forbidden
configurations. Partitionability implies positivity
expansions for the fundamental bases and the $h^*$ coefficients.

\begin{theorem}
\label{thm:part-pos}
Let $S$ be a scheduling problem such that $\allow(S)$ is partitionable. Then there exists a representation
\[
  \SSS_S = \sum_{(\Phi_\text{c};\Phi_\text{f})} \mu_{(\Phi_\text{c};\Phi_\text{f})} \ncL_{(\Phi_\text{c};\Phi_\text{f})}
\]
with non-negative coefficients $\mu_{(\Phi_\text{c};\Phi_\text{f})}\in \{0,1\}$. In particular, 
the scheduling quasisymmetric function is $L$-positive and the scheduling polynomial is $h^*$-positive.

Conversely, the existence of a representation of $\SSS_S$ with 0-1 coefficients in terms of the fundamental basis implies that $\allow(S)$ is partitionable. 
\end{theorem}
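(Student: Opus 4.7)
The plan is to exploit the tautological correspondence between the algebraic definition $\ncL_{(\Phi_\text{c};\Phi_\text{f})}=\sum_{\Phi_\text{c}\leq\Phi\leq\Phi_\text{f}}\ncM_\Phi$ and the geometric fact that the refinement interval $[\Phi_\text{c},\Phi_\text{f}]$, with $\Phi_\text{f}$ a permutation, is precisely the face poset of a top-dimensional half-open simplex in $T_n$. Under this dictionary, a partition of $\allow(S)$ into half-open simplices and a 0-1 representation of $\SSS_S$ in the fundamental generating system are essentially the same piece of data; this identification drives both directions of the theorem.

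For the forward direction, assume $\allow(S)$ is partitionable. By definition this yields a disjoint union decomposition of $\allow(S)$ into half-open top-dimensional simplices, and via the face-poset dictionary this is exactly a collection of intervals $\{[\Phi_\text{c}^{(i)},\Phi_\text{f}^{(i)}]\}_i$, each with $\Phi_\text{f}^{(i)}$ a permutation, such that every $\Phi$ with $S(\Phi)$ lies in exactly one interval and no $\Phi$ with $\neg S(\Phi)$ lies in any interval. Summing,
\[
\sum_i \ncL_{(\Phi_\text{c}^{(i)};\Phi_\text{f}^{(i)})} \;=\; \sum_i \sum_{\Phi\in[\Phi_\text{c}^{(i)},\Phi_\text{f}^{(i)}]}\ncM_\Phi \;=\; \sum_{\Phi\,:\,S(\Phi)}\ncM_\Phi \;=\; \SSS_S,
\]
which is the desired 0-1 representation with $\mu_{(\Phi_\text{c}^{(i)};\Phi_\text{f}^{(i)})}=1$ and all other $\mu$'s equal to $0$. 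The $L$-positivity of the scheduling quasisymmetric function and the $h^*$-positivity of the scheduling polynomial then follow at once from the preceding proposition, which already shows that non-negativity of the $\mu_{(\Phi_\text{c};\Phi_\text{f})}$ forces non-negativity of the $\lambda_\alpha$ and of the $h^*_i$.

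For the converse, given a 0-1 representation $\SSS_S=\sum\mu_{(\Phi_\text{c};\Phi_\text{f})}\ncL_{(\Phi_\text{c};\Phi_\text{f})}$, let $\mathcal{I}$ denote the collection of pairs with $\mu=1$, re-expand in the monomial basis as $\SSS_S=\sum_{(\Phi_\text{c};\Phi_\text{f})\in\mathcal{I}}\sum_{\Phi\in[\Phi_\text{c},\Phi_\text{f}]}\ncM_\Phi$, and compare with $\SSS_S=\sum_{\Phi\,:\,S(\Phi)}\ncM_\Phi$. Since the $\ncM_\Phi$ are linearly independent, every $\Phi$ with $S(\Phi)$ must be covered by exactly one interval of $\mathcal{I}$ (any overlap would force an $\ncM_\Phi$-coefficient of at least $2$) and no $\Phi\notin\allow(S)$ is covered; hence $\mathcal{I}$ yields a partition of $\allow(S)$ into half-open simplices.

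I do not anticipate a serious obstacle: the argument is essentially a dictionary lookup between faces of $T_n$ and the $\ncM/\ncL$ bases, combined with one linear-independence comparison. The only point requiring care is to check, in the forward direction, that partitionability really does deliver intervals of the form required by the fundamental generating system (in particular with $\Phi_\text{f}^{(i)}$ a permutation), which is built into the face-poset/half-open-simplex correspondence developed in Sections~\ref{sec:scheduling-problems} and~\ref{sec:allow-forb-configurations}.
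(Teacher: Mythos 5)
Your proposal is correct and follows essentially the same route as the paper: both directions are the dictionary between half-open top-dimensional simplices of the partition and pairs $(\Phi_\text{c};\Phi_\text{f})$ in the fundamental generating system, with positivity of $\lambda_\alpha$ and $h^*_i$ then quoted from the preceding proposition. Your converse is in fact spelled out slightly more carefully than the paper's (making the linear independence of the $\ncM_\Phi$ and the exactly-once covering explicit), but it is the same argument.
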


\begin{proof}
As $\allow(S)$ is partitionable, there exists a collection $C$ of half-open $n$-dimensional simplices $\sigma\in\TC$ such that $\allow(S)=\bigcup_{\sigma\in C}\sigma$ and this union is disjoint. Each element $\sigma\in C$ corresponds to a distinct pair $(\Phi_\text{c};\Phi_\text{f})$, where $\Phi_\text{f}$ refines $\Phi_\text{c}$ and $\Phi_\text{f}$ has length $n$. Let $P$ denote the collection of all pairs corresponding to half-open simplices in $C$. Then
\[
  \SSS_S = \sum_{(\Phi_c;\Phi_f)\in P} \ncL_{(\Phi_c;\Phi_f)}
\]
as desired. The non-negativity of the coefficients of the scheduling quasisymmetric function in the fundamental basis and the $h^*$-vector of $\poly_S$ is implied by the existence of a non-negative representation of $\SSS_S$ in the fundamental generating system.  

Suppose there exists a representation of $\SSS_S$ with 0-1 coefficients in terms of the fundamental basis.  For each $\mu_{(\Phi_c;\Phi_f)}$ equal to 1, the pair $(\Phi_c;\Phi_f)$ again corresponds to a half-open simplex. 
Each face of $\allow(S)$ is contained in exactly one pair.  
\end{proof}

\begin{corollary}
Let $S$ be a scheduling problem expressible as a union of decision trees with all cells almost open, then the scheduling quasisymmetric function is $L$-positive and the scheduling polynomial is $h^*$-positive. 
\end{corollary}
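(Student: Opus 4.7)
The plan is to observe that the corollary is a direct concatenation of two results already established in the paper: Corollary~\ref{thm:decision} on partitionability of decision trees, and Theorem~\ref{thm:part-pos} on the positivity consequences of partitionability. So the proof should be essentially a two-line chain of implications, with the only real content being the verification that the hypotheses match up cleanly.

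First I would apply Corollary~\ref{thm:decision}: by hypothesis $S$ is (a disjoint union of) decision trees whose cells are all almost open, so $\allow(S)$ is partitionable. Concretely, each decision tree contributes a collection of almost-open cells $C_v$ (one per leaf $v$), each of which is partitionable into half-open unimodular $n$-simplices of $\TC$ by Lemma~\ref{lem:almost-open}, and the disjoint-union hypothesis ensures that assembling these partitions across all cells and all trees yields a partition of $\allow(S)$ itself.

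Next I would invoke Theorem~\ref{thm:part-pos}: partitionability of $\allow(S)$ gives a representation
\[
  \SSS_S = \sum_{(\Phi_\text{c};\Phi_\text{f})\in P} \ncL_{(\Phi_\text{c};\Phi_\text{f})}
\]
with $0$--$1$ coefficients in the fundamental generating system of $\mathrm{NCQSym}$. Applying the type map (and the commutative diagram relating directed refinement in $\mathrm{NCQSym}$ to refinement in $\mathrm{QSym}$) transfers this non-negativity to the $L$-expansion of the scheduling quasisymmetric function, and specializing at $\mathbf{1}^k$ transfers it further to the $h^*$-vector of $\poly_S$ via the identity $\ncL_{(\Phi_\text{c};\Phi_\text{f})}(\mathbf{1}^k)=\binom{k+n-\ell(\Phi_\text{c})}{n}$.

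The only place where something could go wrong is the word ``union'': if the decision trees in the union have overlapping allowed configurations, the disjointness needed in Corollary~\ref{thm:decision} (and hence the disjointness of the assembled simplicial partition) is not automatic. I would handle this by reading ``union'' as disjoint union, which is the interpretation consistent with Corollary~\ref{thm:decision}; alternatively, one can note that any union of decision trees can be refined to a disjoint one by inserting the negations of previously-seen tree conditions as additional if-clauses, preserving the almost-open property of the cells. Beyond this bookkeeping, the argument is a direct composition of the two cited results.
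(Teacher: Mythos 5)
Your proof is correct and follows exactly the paper's own argument, which simply cites Corollary~\ref{thm:decision} (partitionability of $\allow(S)$ for such decision trees) followed by Theorem~\ref{thm:part-pos} (partitionability implies $L$- and $h^*$-positivity). Your extra remark about reading ``union'' as disjoint union is a reasonable clarification of a point the paper leaves implicit, but it does not change the route.
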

\begin{proof}
This is an immediate consequence of Theorem~\ref{thm:part-pos} and Corollary~\ref{thm:decision}.
\end{proof}
The next theorem guarantees positivity of the nc-quasisymmetric scheduling function $\SSS$ in terms of the directed refinement relation.
\begin{theorem}
\label{unique}
Let $S$ be a scheduling problem such that $\allow(S)$ is closed under the directed refinement relation. If for every $\Phi \in \allow(S)$ there exists a \emph{unique} coarsest allowed ordered set partition $\Phi_c \preceq \Phi$ such that $\Phi$ is a directed refinement of $\Phi_c$, then $\allow(S)$ is partitionable and hence the coefficients of $\SSS_S$ in terms of the fundamental basis as well as the $h^*$-vector of $\poly_S$ are non-negative.
\end{theorem}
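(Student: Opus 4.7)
The plan is to construct a partition of $\allow(S)$ by associating to each permutation $\Phi_f \in \allow(S)$ the interval $[\Phi_c(\Phi_f),\Phi_f]$, where $\Phi_c(\Phi_f)$ is the unique coarsest allowed partition $\preceq \Phi_f$ provided by the hypothesis. Each such pair $(\Phi_c(\Phi_f),\Phi_f)$ corresponds to a half-open simplex in $\Tn$, so once these intervals are shown to partition $\allow(S)$, partitionability is established and the positivity conclusions follow from Theorem~\ref{thm:part-pos}.

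The combinatorial backbone of the argument is the observation that for any ordered set partition $\Phi$, the only permutation that can be reached from $\Phi$ via directed refinement is $\hat\Phi$. A directed split of a block $B$ into $A|A'$ requires $\max A < \min A'$, so iterated directed refinement of $\{a_1 < \cdots < a_m\}$ can only produce $a_1|a_2|\cdots|a_m$. Combined with the assumed closure of $\allow(S)$ under $\preceq$, this gives $\hat\Phi \in \allow(S)$ whenever $\Phi \in \allow(S)$, and in particular $\allow(S)$ is pure.

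From this, covering is immediate: given $\Phi \in \allow(S)$, the minimality of $\Phi_c(\hat\Phi)$ in the set of allowed partitions $\preceq \hat\Phi$ (to which $\Phi$ belongs) forces $\Phi_c(\hat\Phi) \preceq \Phi \preceq \hat\Phi$, so $\Phi$ lies in the interval indexed by the permutation $\hat\Phi \in \allow(S)$. Disjointness is equally immediate: if $\Phi$ lies in intervals $[\Phi_c(\Phi_f),\Phi_f]$ and $[\Phi_c(\Phi'_f),\Phi'_f]$, both $\Phi_f$ and $\Phi'_f$ are permutations obtained from $\Phi$ by directed refinement, so both equal $\hat\Phi$.

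Finally, one must check that each interval $[\Phi_c,\hat\Phi_c]$ in the directed refinement poset coincides with the corresponding interval in the face poset of $\Tn$ ordered by regular refinement, and that all its elements lie in $\allow(S)$. Any regular refinement $\Psi$ with $\Phi_c \leq \Psi \leq \hat\Phi_c$ must split each block of $\Phi_c$ into consecutive sub-runs of its sorted order; such splits are directed, so $\Phi_c \preceq \Psi$ and closure puts $\Psi \in \allow(S)$. This makes the construction a decomposition of the face poset of $\allow(S)$ into intervals with top-dimensional maxima, confirming partitionability; Theorem~\ref{thm:part-pos} then yields $\SSS_S = \sum_{\Phi_f} \ncL_{\Phi_c(\Phi_f)}$ with $0/1$ coefficients in the fundamental basis, from which $L$-positivity of the scheduling quasisymmetric function and $h^*$-positivity of $\poly_S$ both follow. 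The main obstacle is keeping the two refinement orders (directed $\preceq$ and regular $\leq$) straight: the partition is naturally defined using $\preceq$, whereas partitionability is formally a statement about $\leq$, and the equality of the two interval notions is precisely the lemma that makes the proof go through.
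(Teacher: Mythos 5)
Your proposal is correct and follows essentially the same route as the paper's proof: partition $\allow(S)$ into the intervals $[\Phi_c,\Phi_f]$ indexed by the permutations $\Phi_f\in\allow(S)$, using the uniqueness hypothesis for covering/disjointness and closure under directed refinement to see each interval is an allowed boolean interval, then invoke Theorem~\ref{thm:part-pos}. The extra details you supply (that $\hat\Phi$ is the only permutation reachable from $\Phi$ by directed refinement, and that the directed-refinement intervals agree with face-poset intervals) are exactly the points the paper leaves implicit.
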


\begin{proof}
For any  ordered set partition $\Phi_f \in \allow(S)$ of length $n$, there exists by assumption a unique coarsest allowed ordered set partition $\Phi_c$ with $\Phi_c\preceq\Phi_f$. Let $P$ be the collection of all such pairs.  Since $\allow(S)$ is closed under the directed refinement relation, the intervals  $[\Phi_c, \Phi_f]$ are boolean lattices.
Furthermore,  for any  $\Phi \in \allow(S)$ there exists a $\Phi_f$ of length $n$ that is a directed refinement of $\Phi$, hence $\Phi$ is contained in the interval with $\Phi_f$ as its maximal element. Therefore, $P$ forms a partition of $\allow(S)$  which completes the proof.
\end{proof}

Note that the condition that $\allow(S)$ is closed under the directed
refinement relation is equivalent to the requirement that the forbidden
configuration $\forb(S)$ be a valid subcomplex of the Coxeter complex; i.e., a
collection of faces closed under taking subsets.
An important class of scheduling problems that satisfy the conditions of Theorem~\ref{unique} are those scheduling problems that can be expressed as a disjunction of conjunctions of strict inequalities, i.e., scheduling problems of the form (\ref{eqn:disjunction-of-conjunctions}) where the $I_{i,j}$ are strict inequalities.  Such
scheduling problems are a special case of
decision trees with all cells fully open, whence all regions of the allowable configuration are convex. Therefore, Theorem~\ref{thm:part-pos} already
provides $\ncL$-positivity, but the conditions of
Theorem~\ref{unique} are easier to interpret in this case.  The
$C_i$ are conjunctions of strict inequalities. Geometrically, the
allowable schedules given by a $C_i$ form a cone; the intersection of
halfspaces defined by the inequalities $x_a < x_b$.
 On the Coxeter
complex, such regions are known as posets of the complex.  In particular, for a given $C_i =
\bigwedge_j I_{i,j}$, the inequalities $I_{i,j}$ naturally induce a
partial order on $[n]$.  The collection of facets of the Coxeter
complex (thought of as permutations) contained in $C_i$ consist of all possible linear extensions of the partial
order.

\begin{example}[P-partitions]
Let $P$ be a poset and $\omega:P\rar\NN$ a labeling of the elements of $P$.  Define a scheduling problem by constructing a conjunction $C_{\omega}$ as follows. For every covering relation $a <_P b$, $C_{\omega}$ contains the weak inequality $x_a \leq x_b$ if $\omega(a) < \omega(b)$ and the strict inequality $x_a < x_b$ if $\omega(b) < \omega(a)$.  
The resulting allowable configuration consists of all $(P,\omega)$-partitions and is the half-open order polytope defined by $\omega$, see~\cite{Stanley1974}.
The scheduling quasisymmetric function is the P-partition generating function
$K_{(P,\omega)}$ defined by Gessel~\cite{Gessel}.  
The ``fundamental theorem of quasisymmetric functions'' ~\cite{Gessel, StanP} is the expansion of 
 $K_{(P,\omega)}$ in the fundamental basis in terms of descent sets of linear extensions of $P$.  
The descent sets provide the unique coarsest
elements of Theorem~\ref{unique}.
\end{example}

\begin{example}[The Coloring Complex]
The coloring complex, regarded as the forbidden
subcomplex of the graph coloring problem, is a valid subcomplex of the Coxeter complex.  
 The graphical arrangement $\mathcal{A}_G$ associated to $G$ is the
subarrangement of the braid arrangement consisting of the hyperplanes
$\{x_i = x_j | \, v_i, v_j \in E\}$. The graphical zonotope ${P}_G$ is the
zonotope dual to this arrangement formed by the sum of all normals to
all planes in the arrangement.  Geometrically, this leads to a
perspective first noted explicitly by Hersch and Swartz: the coloring
complex of $G$ is the codimension one skeleton of the normal fan of ${P}_G$
as subdivided by the Coxeter complex~\cite{HS}.  Equivalently, as a scheduling problem,  the
allowable configuration consists of all integer points in the
interiors of maximal cones of the normal fan.  These interiors are
conjunctions of strict inequalities, one for each facet defining
hyperplane of the cone.
\end{example}

\begin{example}[Generalized Permutahedron]
  A scheduling problem
$S$ can be associated to any generalized permutahedron $GP$ by
defining the forbidden configuration $\forb(S)$ to be the codimension
one skeleton of the normal fan as subdivided by the Coxeter
complex.  Such a scheduling problem is then given as a disjunction of
conjunctions.
 The valid  schedules correspond to all
integer points in the interior of the normal fan.  In~\cite{ABK}, this allowable configuration and nc-quasisymmetric function were studied not as a scheduling problem but in connection to the Hopf monoid of generalized permutahedron. It
was shown that the  generalized permutahedron nc-quasisymmetric function is $\ncL$-positive and $\forb(S)$ is $h$-positive.
\end{example}

\begin{example}[Matroid Polytopes]
Returning to the graphical case, again in~\cite{HS}, the perspective of the normal fan is used to prove that the coloring
complex has a convex ear decomposition which implies strong relations on
the chromatic polynomial. The authors consider the generalization of their results to characteristic polynomials of matroids.
 They note empirically
however that the result do not seem to generalize.  The perspective
here suggests that the generalization should not be from chromatic
polynomials to characteristic polynomials, but from the scheduling
polynomials of graphic zonotopes to the scheduling polynomials of
matroid polytopes. 
 The corresponding scheduling polynomial for matroid polytopes is the
polynomial restriction of the Billera-Jia-Reiner quasisymmetric
function for matroids~\cite{BJR}.
\end{example}

The special cases above are scheduling problems in which the forbidden configuration,
$\forb(S)$, is  a valid subcomplex of the Coxeter complex.
 Next we consider scheduling problems such that the allowed configuration, $\allow(S)$, is a valid
 subcomplex of $\cox_{A_{n-1}}$, i.e., the ordered set partitions satisfying
 $S$ are closed under coarsening.     
 In this case, expanding the scheduling quasisymmetric functions in the fundamental bases is not a natural
 choice.  Expansion in the \emph{co-fundamental} bases, however, is
 natural and does yield good behavior.  The co-fundamental basis for
 NCQSym is defined analogously to the $\mathcal{L}$ basis above using
 a directed coarsening relation.  This basis was first defined
 in~\cite{Zab} and denoted ${\bf {Q}_{\Phi}}$.  Allowing the variables
 to commute gives the co-fundamental basis for QSym.

 Our examples in which $\allow(S)$ is a subcomplex correspond to collections of flags.  Given an integer point $a$ or an ordered set partition $\Delta(a)$ we associate a flag:
$$\mathscr{F}_{a} = \mathscr{F}_{\Delta(a)} := F_0 \subset F_1 \subset \ldots \subset F_k := [n]$$
such that $F_i - F_{i-1} = \Delta(a)_i$ and  $a|_{F_i -
  F_{i-1}} < a|_{F_{i+1} - F_{i}} $. For instance, 
suppose $a$ satisfies $a_2 < a_1 = a_3 = a_4$ then $\Delta(a) = (2|134)$ and $\mathscr{F}_{\Delta(a)} = {\emptyset \subset \{2\} \subset \{1,2,3,4\}}$.

 \begin{example}[Graded Posets and Ehrenborg's quasisymmetric function]
Let $S$ be a scheduling problem such that the collection of flags corresponding to the elements of $\allow(S)$,
$\{ \mathscr{F}_{\Delta(a)} \, | \, \Delta(a) \in \allow(S) \},$
forms the collection of all flags of a graded poset.  Then $\allow(S)$ is closed under coarsening and
 the coefficient of $N_{\alpha}$  of the scheduling quasisymmetric function in the co-fundamental basis is given by:
\begin{equation}\label{posets}[N_{\alpha}] = (-1)^{|\alpha|} \sum_{\alpha \, \textrm{flags}} \prod_i \mu(f_i,f_{i+1}),\end{equation}
where an $\alpha$-flag is a flag such that $|f_{i+1}| - |f_i| = \alpha_i$ and $\mu$ is the M\"{o}bius function.

This scheduling quasisymmetric function is a variant of Ehrenborg's quasisymmetric function for graded posets~\cite{Ehrenborg}.  
The scheduling quasisymmetric function is indexed
by compositions recording the \emph{size} of each step in the flag.  
For any graded poset $P$, Ehrenborg defines
a quasisymmetric function $F(P)$ by summing over all chains of the
poset and recording the \emph{rank jump} of the flag at each step.  Although
the quasisymmetric functions record different data from the poset, 
Equation~\ref{posets} is equivalent to~\cite[Proposition~5.1]{Ehrenborg}.  Our expansion in the
co-fundamental basis is a rephrasing of Ehrenborg's expansion of the image of
the Malvenuto and Reutenauer involution of quasisymmetric functions.
We do not reproduce his proof here.  We simply note that Ehrenborg's derivation of 
 the coefficients is given by a
manipulation of the M\"obius function, and the manipulation continues to hold 
for any collection of compositions associated to 
chains that is closed under coarsening.
\end{example}

\begin{example}[Lattices]
Further suppose that $S$ is a scheduling problem such that the
collection of flags corresponding to the elements of $\allow(S)$ form
the collection of all flags of a lattice $L$.  Then the scheduling
polynomial $\poly_S(k)$ is the \emph{zeta polynomial} of the lattice
which counts the number of multichains of length $k$,
$$\poly_S(k) = |\{ \hat{0} = y_0 \leq y_1 \leq \cdots \leq y_k =
\hat{1} \, | \, y_i \in L \}|.$$
\end{example}

\begin{example}[The lattice of flats and the Bergman fan]
  Let $M$ be a matroid and $L(M)$ be the lattice of flats of $M$.
  Consider the scheduling problem $S$ such that the flags
  corresponding to $\allow(S)$ are precisely the flags of flats of
  $M$.  In~\cite{AK}, it was shown that $\mathscr{F}_{\Delta(a)}$
  is a flag of flats of $M$ if and only if the integer points of order
  class $\Delta(a)$ are in the \emph{Bergman fan} of $M$ ~\cite[Section 9.3]{Sturmfels}.  Thus the
  Bergman fan can be seen as an allowable configuration of a
  scheduling problem.  Briefly, scheduling solutions induce weight
  functions such that all elements of the matroid are contained in
  minimum weight bases.

  As above, the scheduling polynomial is the zeta-polynomial of the
  lattice of flats and counts multichains of flats of length $k$.  One
  can interpret the matroid rank function as a kind of cost function -
  once certain jobs are started, others of the same rank can be added
  without additional cost. To minimize cost, we require that in any
  scheduling of jobs, at each time step we have a closed subset of
  jobs.

  \end{example}

\section*{Acknowledgments}
The authors thank Louis Billera, Matthias Beck and Peter McNamara for many helpful discussions.

 \bibliographystyle{amsplain}

 \bibliography{references}

\end{document}